\theoremstyle{plain}
\newtheorem*{NewTheoremA}{Theorem A}
\newtheorem*{NewPropositionB}{Proposition B}
\newtheorem*{NewCorollaryC}{Corollary C}
\newtheorem*{NewTheoremD}{Theorem D}
\newtheorem*{NewPropositionE}{Proposition E}
\newtheorem{theorem}{Theorem}[section]
\newtheorem{lemma}[theorem]{Lemma}
\newtheorem{corollary}[theorem]{Corollary}
\newtheorem{proposition}[theorem]{Proposition}
\newtheorem{definition}[theorem]{Definition}
\title[]{on word hyperbolicity and virtual freeness of automorphism groups}
\author{Olga Varghese}
\thanks{Funded by the Deutsche 
Forschungsgemeinschaft (DFG, German Research Foundation) under Germany's 
Excellence Strategy -EXC 2044-, Mathematics M\"unster: Dynamics-Geometry-Structure}
\date{\today}
\address{Olga Varghese\\
Department of Mathematics\\
M\"unster University\\ 
Einsteinstra\ss e 62\\
48149 M\"unster (Germany)}
\email{olga.varghese@uni-muenster.de}
\begin{document}
\pagenumbering{arabic}
\begin{abstract}
We show that word hyperbolicity of automorphism groups of graph products $G_\Gamma$ and of Coxeter groups $W_\Gamma$ depends strongly on the shape of the defining graph $\Gamma$. We also characterized those ${\rm Aut}(G_\Gamma)$ and ${\rm Aut}(W_\Gamma)$ in terms of $\Gamma$ that are virtually free. 
\end{abstract}
\maketitle

\section{Introduction}
In this article we study automorphism groups of graph products and Coxeter groups focusing on two powerful properties: (i) word hyperbolicity in the sense of Gromov \cite{Gromov} and (ii) virtual freeness.
 
Given a finite simplicial graph $\Gamma=(V, E)$ and for each vertex $v\in V$ a non-trivial group $G_v$, the graph product $G_\Gamma$ is the free product of the vertex groups with added relations that imply elements of adjacent vertex groups commute. These groups were introduced by Baudisch in \cite{Baudisch} for infinite cyclic vertex groups and later by Green in \cite{Green} for arbitrary vertex groups. Word hyperbolic graph products of finite groups were characterized by Meier in \cite{Meier}. He showed, that $G_\Gamma$ is word hyperbolic iff $\Gamma$ contains no induced cycle of length four. In this paper we study word hyperbolicity in the setting of automorphism groups of graph products of finite groups. First results in this direction were obtained for automorphism groups of graph products of primary cyclic groups by Gutierrez, Piggott and Ruane  \cite[4.14]{Gutierrez}. Our theorem extends their result.

\begin{NewTheoremA}
Let $G_\Gamma$ be a graph product of finite groups. For ${\rm Aut}(G_\Gamma)$ the following statements are equivalent
\begin{enumerate}
\item[(i)] The group ${\rm Aut}(G_\Gamma)$ is word hyperbolic.
\item[(ii)] The group ${\rm Aut}(G_\Gamma)$ has no subgroup isomorphic to $\mathbb{Z}\times\mathbb{Z}$.
\item[(iii)] The graph $\Gamma$ has no induced cycle of length four and  does not contain any separating intersection of links SIL\footnote{ for definition see \ref{SIL}.}.
\end{enumerate} 
\end{NewTheoremA}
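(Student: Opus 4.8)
The plan is to establish the cycle of implications (i) $\Rightarrow$ (ii) $\Rightarrow$ (iii) $\Rightarrow$ (i). The implication (i) $\Rightarrow$ (ii) is immediate: in a word hyperbolic group the centraliser of an infinite order element is virtually cyclic, leaving no room for a copy of $\mathbb{Z}\times\mathbb{Z}$. For (ii) $\Rightarrow$ (iii) I argue contrapositively, exhibiting a subgroup $\mathbb{Z}\times\mathbb{Z}\le{\rm Aut}(G_\Gamma)$ as soon as $\Gamma$ has an induced cycle of length four or a SIL. Suppose first $\Gamma$ contains an induced $4$-cycle on vertices $a,b,c,d$ with edges $ab,bc,cd,da$. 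The parabolic subgroup $\langle G_a,G_b,G_c,G_d\rangle$ is canonically the graph product over this induced $4$-cycle, which decomposes as $(G_a\ast G_c)\times(G_b\ast G_d)$; each free product of two non-trivial finite groups contains an infinite cyclic subgroup, so this parabolic contains $\mathbb{Z}\times\mathbb{Z}$. Its centre is trivial (the $4$-cycle has no vertex joined to all others), so any central element of $G_\Gamma$ lying in this parabolic is central in it and hence trivial; therefore the flat meets $Z(G_\Gamma)$ trivially and embeds into ${\rm Inn}(G_\Gamma)\cong G_\Gamma/Z(G_\Gamma)\le{\rm Aut}(G_\Gamma)$.

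Now suppose $\Gamma$ has a SIL on non-adjacent vertices $v,w$ with $M={\rm lk}(v)\cap{\rm lk}(w)$ separating $v$ from $w$, and let $K$ and $L$ be the sides of $M$ containing $v$ and $w$ respectively. I will produce two commuting automorphisms of infinite order. The first is an inner automorphism $\iota_t$, where $t$ is an infinite order element straddling $K$ and $L$, chosen so that no standard partial conjugation supported on a single side of $M$ moves it. The second, call it $\alpha$, is assembled from partial conjugations attached to the SIL and supported on one side of the separator; here the SIL hypothesis is precisely what guarantees that $\alpha$ is non-trivial, of infinite order, and fixes the conjugator $t$, so that $\alpha$ commutes with $\iota_t$. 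One then checks that there is no relation $\iota_t^{\,m}\alpha^{\,n}=1$ with $(m,n)\ne(0,0)$, so that $\langle\iota_t,\alpha\rangle\cong\mathbb{Z}\times\mathbb{Z}$.

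For (iii) $\Rightarrow$ (i): by Meier's theorem the absence of an induced $4$-cycle already makes $G_\Gamma$ itself word hyperbolic, so it remains to control ${\rm Aut}(G_\Gamma)$. I would use a generating set of ${\rm Aut}(G_\Gamma)$ by factor automorphisms, graph automorphisms, transvections and partial conjugations. Since all vertex groups are finite, each such generator has finite order; hence the only way ${\rm Out}(G_\Gamma)$ could fail to be finite is through an infinite subgroup generated by partial conjugations (and transvections). A combinatorial analysis of how partial conjugations based at distinct vertices compose modulo inner automorphisms then shows that the absence of a SIL forces enough cancellation to make ${\rm Out}(G_\Gamma)$ finite. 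Consequently ${\rm Inn}(G_\Gamma)\cong G_\Gamma/Z(G_\Gamma)$ has finite index in ${\rm Aut}(G_\Gamma)$; as $Z(G_\Gamma)$ is finite (a product of centres of vertex groups) and $G_\Gamma$ is hyperbolic, the quotient $G_\Gamma/Z(G_\Gamma)$ is hyperbolic, and therefore so is ${\rm Aut}(G_\Gamma)$, being quasi-isometric to it.

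The main obstacle is the bookkeeping around the SIL in both directions. In (ii) $\Rightarrow$ (iii) one must pin down the two automorphisms attached to a SIL precisely enough to verify that they commute and are independent. In (iii) $\Rightarrow$ (i) — the technical heart of the argument — one must prove that ``no SIL'' produces all the cancellation needed to reduce ${\rm Out}(G_\Gamma)$ to a finite group, which requires a detailed study of the interaction of partial conjugations supported at different vertices. By contrast, the induced $4$-cycle case and the reduction of hyperbolicity of ${\rm Aut}(G_\Gamma)$ to that of $G_\Gamma/Z(G_\Gamma)$ are essentially routine.
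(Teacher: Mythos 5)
Your overall skeleton matches the paper's: show $Z(G_\Gamma)$ is finite, get (i)$\Rightarrow$(ii) from the standard fact about flats in hyperbolic groups, derive the absence of an induced $4$-cycle from ${\rm Inn}(G_\Gamma)\cong G_\Gamma/Z(G_\Gamma)$ together with Meier's theorem (your explicit $(G_a*G_c)\times(G_b*G_d)$ flat in the $4$-cycle parabolic is fine and essentially reproves one direction of Meier), build a $\mathbb{Z}\times\mathbb{Z}$ in ${\rm Aut}(G_\Gamma)$ from a SIL, and for (iii)$\Rightarrow$(i) reduce hyperbolicity of ${\rm Aut}(G_\Gamma)$ to that of $G_\Gamma$ via finiteness of the centre and of ${\rm Out}(G_\Gamma)$. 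However, there is a genuine gap at the step you yourself call the technical heart: you assert that a ``combinatorial analysis'' of partial conjugations shows that no SIL forces ${\rm Out}(G_\Gamma)$ to be finite, but you give no such analysis. This is not routine bookkeeping; it is a substantial theorem (for graph products of finite groups it is Proposition 3.20 of Genevois--Martin, with earlier special cases by Charney--Ruane--Stambaugh--Vijayan and Gutierrez--Piggott--Ruane), and the paper simply cites it. Moreover, your analysis is premised on ${\rm Aut}(G_\Gamma)$ being generated by factor automorphisms, graph automorphisms, transvections and partial conjugations; such a Laurence-type generation statement is known for cyclic or abelian vertex groups, but you cannot take it for granted for graph products of arbitrary finite vertex groups, so even the starting point of your argument needs justification. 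Without either a proof of the ``no SIL $\Rightarrow$ ${\rm Out}(G_\Gamma)$ finite'' implication or a citation for it, (iii)$\Rightarrow$(i) is not established.

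A smaller but real gap is the SIL case of (ii)$\Rightarrow$(iii): you describe the two commuting infinite-order automorphisms only in outline and defer the verification. The paper's Lemma 3.6 pins this down concretely: with $v,w$ at distance $\geq 2$ and $C$ a component of $\langle V-({\rm lk}(v)\cap{\rm lk}(w))\rangle$ containing neither $v$ nor $w$, take non-trivial $x\in G_v$, $y\in G_w$, let $f$ be conjugation by $xy$ and pair it with $\pi_{x,C}\circ\pi_{y,C}$; these commute (they agree, up to the partial conjugations acting trivially, on the pieces of the decomposition determined by $C$ and its complement) and generate $\mathbb{Z}\times\mathbb{Z}$. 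Your sketch is in the same spirit, but as written (``chosen so that no standard partial conjugation supported on a single side of $M$ moves it'', ``one then checks there is no relation'') it is a plan rather than a proof, and the independence check is exactly the point that needs the explicit choice of $t=xy$ and of the component $C$. Also, your parenthetical that $Z(G_\Gamma)$ is ``a product of centres of vertex groups'' is imprecise (it is generated by centres of those vertex groups whose vertex is adjacent to all other vertices), though the conclusion that it is finite is correct and is all you use.
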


Another interesting property of groups is virtual freeness.
A group is called virtually free if it contains a free subgroup of finite index. Note that finite groups are virtually free. It is known that every finitely generated virtually free group is word hyperbolic. Virtually free graph products of finite groups were characterized in terms of $\Gamma$ by Lohrey and Senizergues in \cite{Lohrey}. They proved that a graph product of finite groups $G_\Gamma$ is virtually free iff $\Gamma$ is chordal (i. e. $\Gamma$ contains no induced cycles of length $\geq 4$). We investigate how to characterize virtual freeness of automorphism groups of graph products of finite groups and we obtain
\begin{NewPropositionB}
Let $G_{\Gamma}$ be a graph product of finite groups. For ${\rm Aut}(G_\Gamma)$ the following statements are equivalent
\begin{enumerate}
\item[(i)] The group ${\rm Aut}(G_\Gamma)$ is virtually free.
\item[(ii)] The graph $\Gamma$ is chordal and does not contain a SIL.
\end{enumerate} 
\end{NewPropositionB}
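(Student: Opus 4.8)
The plan is to prove the two implications separately, relying on Theorem~A, on the Lohrey--Senizergues characterisation that $G_\Gamma$ is virtually free if and only if $\Gamma$ is chordal, and on the standard permanence properties of the class of virtually free groups: closure under passing to subgroups, to finite-index overgroups, and to quotients by finite normal subgroups. Throughout I would use that $\mathrm{Aut}(G_\Gamma)$ is finitely generated and that the centre $Z(G_\Gamma)$ is finite --- it is the direct product of the centres $Z(G_v)$ over the vertices $v$ adjacent to every other vertex --- so that $\mathrm{Inn}(G_\Gamma)\cong G_\Gamma/Z(G_\Gamma)$ is virtually free precisely when $G_\Gamma$ is.

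For (i)$\Rightarrow$(ii): if $\mathrm{Aut}(G_\Gamma)$ is virtually free it contains no subgroup isomorphic to $\mathbb{Z}\times\mathbb{Z}$, so by Theorem~A the graph $\Gamma$ has no induced four-cycle and no SIL. Also $\mathrm{Inn}(G_\Gamma)\le\mathrm{Aut}(G_\Gamma)$ is virtually free, hence so is $G_\Gamma/Z(G_\Gamma)$, and since $Z(G_\Gamma)$ is finite the group $G_\Gamma$ is itself virtually free; by Lohrey--Senizergues $\Gamma$ is chordal. Chordality subsumes the absence of an induced four-cycle, so together with the absence of a SIL this is exactly (ii).

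For (ii)$\Rightarrow$(i), the substantive direction, the idea is to separate the roles of the two hypotheses. Chordality gives, via Lohrey--Senizergues, that $G_\Gamma$, and hence $\mathrm{Inn}(G_\Gamma)\cong G_\Gamma/Z(G_\Gamma)$, is virtually free; so in the extension $1\to\mathrm{Inn}(G_\Gamma)\to\mathrm{Aut}(G_\Gamma)\to\mathrm{Out}(G_\Gamma)\to 1$ it suffices to prove that $\mathrm{Out}(G_\Gamma)$ is finite, since then $\mathrm{Aut}(G_\Gamma)$ has a virtually free subgroup of finite index. Thus the matter reduces to the claim that $\mathrm{Out}(G_\Gamma)$ is finite when $\Gamma$ is chordal with no SIL. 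I would approach this through the (finite) standard generating set of $\mathrm{Aut}(G_\Gamma)$ --- graph symmetries, vertex-group automorphisms, partial conjugations, transvections --- and their images in $\mathrm{Out}(G_\Gamma)$: the graph symmetries and vertex-group automorphisms already span a finite subgroup, so the content is that, modulo inner automorphisms, the partial conjugations and transvections contribute only finitely much. Chordality rules out the long induced cycles responsible for transvection-type obstructions, while the absence of a SIL is precisely what forces the image of every partial conjugation in $\mathrm{Out}(G_\Gamma)$ to be of finite order --- in the presence of a SIL with vertices $u,v$ and a common component $K$ one instead gets the infinite-order class of the conjugation of $K$ by $uv$, obtained by conjugating $K$ first by $u$ and then by $v$. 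An alternative, perhaps cleaner, route is induction on $|V(\Gamma)|$ via Dirac's theorem: a chordal $\Gamma$ is either complete, so that $G_\Gamma$ and $\mathrm{Aut}(G_\Gamma)$ are finite, or has a clique separator $S$ and hence a splitting $G_\Gamma=G_{\Gamma_1}*_{G_S}G_{\Gamma_2}$ over the finite subgroup $G_S$; one then shows that the absence of a SIL forces $\mathrm{Aut}(G_\Gamma)$ to preserve up to finite index a canonical splitting of $G_\Gamma$ over finite subgroups refining such clique-separator splittings, so that $\mathrm{Aut}(G_\Gamma)$ acts on the associated Bass--Serre tree with finite edge stabilisers and --- by the induction hypothesis --- virtually free vertex stabilisers, whence it is virtually free.

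The main obstacle is precisely this claim: extracting from the combinatorial no-SIL hypothesis the algebraic fact that every partial conjugation (and every transvection left available by chordality) becomes inner or finite-order in $\mathrm{Out}(G_\Gamma)$, while carefully tracking how the conjugating elements interact with the amalgam and direct-product decompositions of $G_\Gamma$ dictated by chordality. Everything else --- the implication (i)$\Rightarrow$(ii), the reductions via $\mathrm{Inn}(G_\Gamma)$ and $Z(G_\Gamma)$, and the final assembly --- is routine once Theorem~A and the Lohrey--Senizergues characterisation are available.
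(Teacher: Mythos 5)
Your implication (i)$\Rightarrow$(ii) is fine and matches the paper's argument (virtual freeness of ${\rm Inn}(G_\Gamma)\cong G_\Gamma/Z(G_\Gamma)$ plus finiteness of the centre gives $G_\Gamma$ virtually free, hence $\Gamma$ chordal by Lohrey--Senizergues; a SIL would produce $\mathbb{Z}\times\mathbb{Z}$ inside ${\rm Aut}(G_\Gamma)$). Your reduction of (ii)$\Rightarrow$(i) to the single claim that ${\rm Out}(G_\Gamma)$ is finite is also correct, and in fact slightly more economical than the paper, which invokes Pettet's theorem where you only use the easy extension argument $1\to{\rm Inn}\to{\rm Aut}\to{\rm Out}\to 1$.

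The genuine gap is that this key claim --- $\Gamma$ has no SIL implies ${\rm Out}(G_\Gamma)$ is finite --- is exactly what you do not prove, and your sketch would not close it. First, showing that every partial conjugation and transvection has finite-order image in ${\rm Out}(G_\Gamma)$ is not enough: a group generated by finitely many finite-order elements can perfectly well be infinite, so one needs an actual finiteness argument for the whole outer automorphism group, not generator-by-generator order estimates. (Also, chordality plays no role here: the relevant fact is that for graph products of finite groups ${\rm Out}(G_\Gamma)$ is finite \emph{if and only if} $\Gamma$ contains no SIL; the paper quotes this as Proposition \ref{finOut}, due to Genevois--Martin.) Second, your alternative route via Dirac clique separators and a canonical splitting preserved by ${\rm Aut}(G_\Gamma)$ with finite edge stabilisers is only a program: the assertion that the no-SIL hypothesis forces such an invariant splitting with virtually free vertex stabilisers is itself a substantial claim, at least as hard as the statement being proved, and nothing in the proposal establishes it. If you instead cite the known characterisation of finiteness of ${\rm Out}(G_\Gamma)$ in terms of SILs, your argument becomes complete and is then essentially the paper's proof.
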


We are also interested in fixed point properties.
Recall that a group $G$ has Serre's fixed point property ${\rm F}\mathcal{A}$ if  every action of $G$ on a simplicial tree without inversions of edges has a global fixed point. It was proven by Dicks and Dunwoody in \cite[IV 1.9]{Dicks} that a finitely generated group $G$ is virtually free if and only if $G$ acts on a tree without inversions of edges and such that all vertex stabilizers are finite. Using this result and Proposition B we obtain

\begin{NewCorollaryC}
Let $G$ be a finitely generated group. If $G$ is infinite and virtually free, then $G$ does not have property ${\rm F}\mathcal{A}$. 

In partiular: Let $G_\Gamma$ be a graph product of finite groups. If $\Gamma$ is not complete, chordal and contains no SIL, then ${\rm Aut}(G_\Gamma)$ does not have Serre's fixed point property ${\rm F}\mathcal{A}$.
\end{NewCorollaryC}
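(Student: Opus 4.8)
The plan is to prove the first (general) assertion directly from the Dicks--Dunwoody criterion, and then derive the statement about ${\rm Aut}(G_\Gamma)$ by feeding Proposition B into it, once we have checked that ${\rm Aut}(G_\Gamma)$ is infinite and finitely generated.

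First I would dispose of the general claim. Assume $G$ is finitely generated, infinite and virtually free. By the Dicks--Dunwoody result quoted above, $G$ acts without inversions of edges on a simplicial tree $T$ all of whose vertex stabilizers are finite. This action cannot have a global fixed point: if $G$ fixed a vertex $w\in T$, then $G$ would be contained in $\mathrm{Stab}(w)$, which is finite, contradicting the infiniteness of $G$. Thus $G$ admits an action on a tree without inversions and without a global fixed vertex, which is exactly the failure of property ${\rm F}\mathcal{A}$.

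For the statement on automorphism groups, suppose $\Gamma$ is not complete, is chordal and contains no SIL. By Proposition B the group ${\rm Aut}(G_\Gamma)$ is virtually free, and by the (known) finite generation of automorphism groups of graph products of finite groups it is finitely generated. It remains to see that ${\rm Aut}(G_\Gamma)$ is infinite. Since $\Gamma$ is not complete it has two non-adjacent vertices $u,v$, and the subgroup generated by $G_u$ and $G_v$ is the free product $G_u\ast G_v$, which is infinite; hence $G_\Gamma$ is infinite. On the other hand $Z(G_\Gamma)$ is the direct product of the groups $Z(G_v)$ over those vertices $v$ that are adjacent to every other vertex of $\Gamma$, so $Z(G_\Gamma)$ is finite. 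Therefore ${\rm Inn}(G_\Gamma)\cong G_\Gamma/Z(G_\Gamma)$ is infinite, and since ${\rm Inn}(G_\Gamma)\leq{\rm Aut}(G_\Gamma)$ the group ${\rm Aut}(G_\Gamma)$ is infinite. Applying the first part with $G={\rm Aut}(G_\Gamma)$ completes the argument.

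None of the steps should be a serious obstacle: the tree-action bookkeeping is routine, and the corollary is really a packaging of the Dicks--Dunwoody theorem with Proposition B. The only points that genuinely need a word of justification are that ${\rm Aut}(G_\Gamma)$ is finitely generated and infinite, and I expect recording the exact description of $Z(G_\Gamma)$ (so that its finiteness, and hence the infiniteness of ${\rm Inn}(G_\Gamma)$, is transparent) to be the mildest technical input needed.
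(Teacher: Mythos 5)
Your proposal is correct and follows essentially the same route the paper intends: the Dicks--Dunwoody characterization gives an action on a tree with finite vertex stabilizers, which cannot have a fixed vertex when $G$ is infinite, and the statement about ${\rm Aut}(G_\Gamma)$ then follows from Proposition B. Your added checks that ${\rm Aut}(G_\Gamma)$ is finitely generated and infinite (via the finiteness of $Z(G_\Gamma)$ and the infiniteness of ${\rm Inn}(G_\Gamma)$) are exactly the implicit inputs the paper relies on.
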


Let us remark, that it was proven in \cite[Cor. B]{Varghese} that for $n\geq 4$ the group ${\rm Aut}(\mathbb{Z}_2^{*n})$ has property ${\rm F}\mathcal{A}$. This result was generalized in \cite[Thm. 1.1]{Leder} for groups ${\rm Aut}(\mathbb{Z}_{k}*\ldots*\mathbb{Z}_{k})$.
$ $\\

Another important class of groups which are also defined via simplicial graphs is the class of Coxeter groups. Given a finite simplicial graph $\Gamma=(V,E)$ with an edge-labeling $\varphi:E\rightarrow \mathbb{N}_{\geq 2}$. The corresponding Coxeter group is defined as follows $W_\Gamma=\langle V\mid v^2, (vw)^{\varphi(\left\{v, w\right\})}, v\in V, \left\{v, w\right\}\in E\rangle$.
Word hyperbolic Coxeter groups were characterized  in terms of $\Gamma$ by Moussong in \cite[17.1]{Moussong}. 
Our focus here is a study of word hyperbolicity for automorphism groups of Coxeter groups. We obtain the following results.
\begin{NewTheoremD}
Let $W_\Gamma$ be a Coxeter group. 
\begin{enumerate}
\item[(i)] If $W_\Gamma$ is not word hyperbolic, then ${\rm Aut}(W_\Gamma)$ is not word hyperbolic.
\item[(ii)] If $\Gamma$ has more than two connected components, then  ${\rm Aut}(W_\Gamma)$ is not word hyperbolic.
\item[(iii)] If $\Gamma$ has two connected components $\Gamma_1$ and $\Gamma _2$, then ${\rm Aut}(W_\Gamma)$ is word hyperbolic if and only if the parabolic subgroups $W_{\Gamma_1}$ and $W_{\Gamma_2}$ are finite.
\item[(iv)] If $W_\Gamma$ is word hyperbolic and ${\rm Out}(W_\Gamma)$ is finite, then ${\rm Aut}(G_\Gamma)$ is word hyperbolic.  
\end{enumerate} 
\end{NewTheoremD}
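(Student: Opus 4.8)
The plan is to handle all four parts through the short exact sequence
$1 \to \mathrm{Inn}(W_\Gamma) \to \mathrm{Aut}(W_\Gamma) \to \mathrm{Out}(W_\Gamma) \to 1$, together with four standard facts: the centre $Z(W_\Gamma)$ of any Coxeter group is finite; a quotient of a word hyperbolic group by a finite normal subgroup is word hyperbolic; a group containing a word hyperbolic group as a finite index subgroup is word hyperbolic; and a word hyperbolic group contains no subgroup isomorphic to $\mathbb{Z}\times\mathbb{Z}$. I will also use Moussong's criterion in the form: if $W_\Gamma$ is not word hyperbolic then $W_\Gamma$ contains a subgroup isomorphic to $\mathbb{Z}\times\mathbb{Z}$. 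From this, (iv) and (i) are quick. For (iv): since $Z(W_\Gamma)$ is finite and $W_\Gamma$ is word hyperbolic, $\mathrm{Inn}(W_\Gamma)\cong W_\Gamma/Z(W_\Gamma)$ is word hyperbolic, and $[\mathrm{Aut}(W_\Gamma):\mathrm{Inn}(W_\Gamma)]=|\mathrm{Out}(W_\Gamma)|<\infty$, so $\mathrm{Aut}(W_\Gamma)$ is word hyperbolic. For (i): if $W_\Gamma$ is not word hyperbolic it contains a copy $A\cong\mathbb{Z}\times\mathbb{Z}$; as $A$ has no non-trivial finite subgroup, $A\cap Z(W_\Gamma)=1$, so $A$ embeds into $\mathrm{Inn}(W_\Gamma)\leq\mathrm{Aut}(W_\Gamma)$, and $\mathrm{Aut}(W_\Gamma)$ is not word hyperbolic.

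The heart of the argument is producing a $\mathbb{Z}\times\mathbb{Z}$ inside $\mathrm{Aut}(W_\Gamma)$ for (ii) and for the forward direction of (iii). Write $W_\Gamma=W_{\Gamma_1}*\cdots*W_{\Gamma_k}$, where the $W_{\Gamma_i}$ are the (non-trivial, finitely generated) Coxeter groups of the connected components of $\Gamma$. For (iii)$\Rightarrow$ I argue by contraposition: assume $k=2$ and, without loss of generality, $W_{\Gamma_1}$ infinite; then $W_{\Gamma_1}$ contains an element $g$ of infinite order (an infinite finitely generated Coxeter group always contains $\mathbb{Z}$), and I take $\gamma$ to be the partial conjugation which is the identity on $W_{\Gamma_1}$ and $y\mapsto gyg^{-1}$ on $W_{\Gamma_2}$, together with the inner automorphism $\mathrm{conj}_g$. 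For (ii), $k\geq 3$: set $H=W_{\Gamma_1}*W_{\Gamma_2}*W_{\Gamma_4}*\cdots*W_{\Gamma_k}$ (all factors except $W_{\Gamma_3}$), pick non-trivial $a\in W_{\Gamma_1}$, $b\in W_{\Gamma_2}$, put $g=ab\in H$ (which has infinite order by the free product normal form), let $\gamma$ be the identity on $H$ and $x\mapsto gxg^{-1}$ on $W_{\Gamma_3}$, and again pair it with $\mathrm{conj}_g$. In both cases a direct computation on the free factors shows $\gamma$ and $\mathrm{conj}_g$ commute. To see $\langle\gamma,\mathrm{conj}_g\rangle\cong\mathbb{Z}\times\mathbb{Z}$, suppose $\gamma^m\mathrm{conj}_g^n=\mathrm{id}$: evaluating on the factor that $\gamma$ fixes forces $g^n$ to be central in $W_{\Gamma_1}$ (case (iii); the centre of a Coxeter group is finite, so $n=0$) respectively central in $H$ (case (ii); here $H$ has at least two non-trivial free factors, so $Z(H)=1$ and $n=0$); then evaluating $\gamma^m=\mathrm{id}$ on the conjugated factor forces $g^m$ to lie in that factor (centralisers of non-trivial elements of a free factor lie inside that factor), so $g^m=1$ and $m=0$. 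Hence $\mathrm{Aut}(W_\Gamma)$ contains $\mathbb{Z}\times\mathbb{Z}$ and is not word hyperbolic. Note that the dichotomy ``$Z(H)$ is trivial precisely when at least two factors remain after deleting one'' is exactly why (ii) holds unconditionally while (iii) needs a finiteness hypothesis.

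Finally, for (iii)$\Leftarrow$: if $W_{\Gamma_1}$ and $W_{\Gamma_2}$ are finite, then $W_\Gamma=W_{\Gamma_1}*W_{\Gamma_2}$ is a finitely generated free product of finite groups, hence acts on its Bass--Serre tree with finite vertex stabilisers, so it is virtually free and word hyperbolic. Moreover $\mathrm{Out}(W_\Gamma)$ is finite: by uniqueness of the Grushko/Kurosh decomposition, any $\varphi\in\mathrm{Aut}(W_\Gamma)$ sends the pair $\{W_{\Gamma_1},W_{\Gamma_2}\}$ to a conjugate pair, so after composing with a suitable inner automorphism $\varphi$ preserves $\{W_{\Gamma_1},W_{\Gamma_2}\}$; the automorphisms preserving this pair form the finite group $(\mathrm{Aut}(W_{\Gamma_1})\times\mathrm{Aut}(W_{\Gamma_2}))\rtimes(\text{possible swap})$, whence $\mathrm{Out}(W_\Gamma)$ is a quotient of a finite group. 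Now (iv) applies and shows $\mathrm{Aut}(W_\Gamma)$ is word hyperbolic. The main obstacle is the bookkeeping in the $\mathbb{Z}\times\mathbb{Z}$-constructions of the second paragraph — verifying commutation and, especially, the absence of relations — for which the key inputs are the triviality of the centre of a free product of at least two non-trivial groups and the description of centralisers of free-factor elements; the finiteness of $\mathrm{Out}$ in (iii)$\Leftarrow$ is then routine.
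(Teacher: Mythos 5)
Your proposal is correct, and parts (i) and (iv) coincide with the paper's argument (inject $\mathbb{Z}\times\mathbb{Z}$ into ${\rm Inn}(W_\Gamma)$ using finiteness of $Z(W_\Gamma)$; pass from $W_\Gamma$ to ${\rm Inn}(W_\Gamma)$ to ${\rm Aut}(W_\Gamma)$ via finite kernel and finite index). For (ii) and (iii), however, you take a genuinely different route. The paper views $W_\Gamma$ as a graph product over a discrete graph whose vertex groups are the $W_{\Gamma_i}$ and quotes its general machinery: the SIL lemma producing $\mathbb{Z}\times\mathbb{Z}$ in ${\rm Aut}(G_\Gamma)$ handles (ii); for (iii) it splits the infinite case into ``infinite torsion'' (excluded via Gromov's theorem on torsion subgroups of hyperbolic groups) and ``contains an infinite-order element'', and in the finite case it cites Karrass--Pietrowski--Solitar (so ${\rm Aut}(W_{\Gamma_1}*W_{\Gamma_2})$ is an amalgam of finite groups) together with Kharlampovich--Myasnikov to get hyperbolicity directly. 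You instead build the $\mathbb{Z}\times\mathbb{Z}$ by hand from a partial conjugation and ${\rm conj}_g$, verifying commutation and faithfulness via triviality of the centre of a free product and the description of centralisers of factor elements --- this is essentially the paper's SIL construction made explicit, and it buys you a self-contained argument; you also bypass the torsion dichotomy by using that an infinite finitely generated Coxeter group contains an infinite-order element (true, e.g.\ by linearity plus Schur, but worth citing). In (iii)$\Leftarrow$ you prove finiteness of ${\rm Out}(W_{\Gamma_1}*W_{\Gamma_2})$ via the Kurosh/Grushko decomposition and then apply (iv), rather than quoting the amalgam structure of ${\rm Aut}$; the one step that is glossed is the claim that a \emph{single} inner automorphism straightens both factors simultaneously: uniqueness of the decomposition only gives $\varphi(W_{\Gamma_1})=uCu^{-1}$, $\varphi(W_{\Gamma_2})=vDv^{-1}$ with $\{C,D\}=\{W_{\Gamma_1},W_{\Gamma_2}\}$, and to conclude you still need the (standard, e.g.\ Bass--Serre or normal form) observation that $C*wDw^{-1}=C*D$ forces $w\in CD$, after which one conjugation fixes both factors. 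With that lemma supplied, your argument is complete and arguably more elementary than the paper's reliance on the Karrass--Pietrowski--Solitar and Kharlampovich--Myasnikov results, at the cost of redoing work the paper's cited literature already packages.
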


Virtually free Coxeter groups were described by Mihalik and Tschantz in \cite[Thm. 34]{Mihalik}. They showed, that  $W_\Gamma$ is virtually free iff $\Gamma$ is chordal and for every complete subgraph $\Gamma'\subseteq\Gamma$ the parabolic subgroup $W_{\Gamma'}$ is finite.
We want to answer the following question: For which shape of the graph $\Gamma$ is the automorphism group ${\rm Aut}(W_\Gamma)$ virtually free? We obtain the following partial result.
\begin{NewPropositionE}
Let $W_\Gamma$ be a Coxeter group. For ${\rm Aut}(W_\Gamma)$ the following statements are equivalent
\begin{enumerate}
\item[(i)] The group ${\rm Aut}(W_\Gamma)$ is virtually free 
\item[(ii)] The group ${\rm Out}(W_\Gamma)$ is finite, the graph $\Gamma$ is chordal and for every complete subgraph $\Gamma'\subseteq\Gamma$ the parabolic subgroup $W_{\Gamma'}$ is finite.
\end{enumerate}
\end{NewPropositionE}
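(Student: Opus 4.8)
The key observation is that, by the classification of Mihalik and Tschantz \cite{Mihalik}, statement (ii) says exactly that \emph{$W_\Gamma$ is virtually free and ${\rm Out}(W_\Gamma)$ is finite}. So the plan is to prove this reformulation: ${\rm Aut}(W_\Gamma)$ is virtually free if and only if $W_\Gamma$ is virtually free and ${\rm Out}(W_\Gamma)$ is finite. Throughout I will use the exact sequence $1\to{\rm Inn}(W_\Gamma)\to{\rm Aut}(W_\Gamma)\to{\rm Out}(W_\Gamma)\to 1$ together with ${\rm Inn}(W_\Gamma)\cong W_\Gamma/Z(W_\Gamma)$, and the fact that $Z(W_\Gamma)$ is finite. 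The latter holds because $W_\Gamma$ is the direct product of finitely many irreducible Coxeter groups (finitely many since $\Gamma$ is finite), an infinite irreducible Coxeter group being centreless and a finite one having centre of order at most two, so $Z(W_\Gamma)$ is the product of these finitely many finite centres.

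For the direction (ii)$\Rightarrow$(i): assume $W_\Gamma$ is virtually free and ${\rm Out}(W_\Gamma)$ is finite. Pick a free subgroup $F\leq W_\Gamma$ of finite index; since $F\cap Z(W_\Gamma)$ is a finite subgroup of a free group it is trivial, so $F$ maps isomorphically onto a subgroup $FZ(W_\Gamma)/Z(W_\Gamma)$ of $W_\Gamma/Z(W_\Gamma)\cong{\rm Inn}(W_\Gamma)$ of finite index. Hence ${\rm Inn}(W_\Gamma)$ is virtually free, and as it has finite index in ${\rm Aut}(W_\Gamma)$ (because ${\rm Out}(W_\Gamma)$ is finite), a finite-index free subgroup of ${\rm Inn}(W_\Gamma)$ has finite index in ${\rm Aut}(W_\Gamma)$ as well; thus ${\rm Aut}(W_\Gamma)$ is virtually free.

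For the direction (i)$\Rightarrow$(ii): assume ${\rm Aut}(W_\Gamma)$ is virtually free. Then its subgroup ${\rm Inn}(W_\Gamma)\cong W_\Gamma/Z(W_\Gamma)$ is virtually free; choosing a finite-index free subgroup $\bar F\leq W_\Gamma/Z(W_\Gamma)$, its preimage $F^{*}\leq W_\Gamma$ has finite index and is a central extension of $\bar F$ by the finite group $Z(W_\Gamma)$, which splits because $H^{2}(\bar F;Z(W_\Gamma))=0$; hence $F^{*}\cong Z(W_\Gamma)\times\bar F$ is virtually free, and therefore so is $W_\Gamma$. By \cite{Mihalik} it follows that $\Gamma$ is chordal and that $W_{\Gamma'}$ is finite for every complete subgraph $\Gamma'\subseteq\Gamma$ (the case $W_\Gamma$ finite, where $\Gamma$ is complete, is included). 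It remains to show that ${\rm Out}(W_\Gamma)$ is finite, which is clear if $W_\Gamma$ is finite. If $W_\Gamma$ is infinite, then $N:={\rm Inn}(W_\Gamma)\cong W_\Gamma/Z(W_\Gamma)$ is infinite and finitely generated; let $\Phi\leq{\rm Aut}(W_\Gamma)$ be a free subgroup of finite index. Then $\Phi\cap N$ is nontrivial (it has finite index in the infinite group $N$), finitely generated (finite index in the finitely generated $N$), and normal in $\Phi$ (as $N\trianglelefteq{\rm Aut}(W_\Gamma)$), hence of finite index in the free group $\Phi$ by the classical theorem of Greenberg that a nontrivial finitely generated normal subgroup of a free group has finite index. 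Consequently $[{\rm Aut}(W_\Gamma):N]\leq[{\rm Aut}(W_\Gamma):\Phi]\cdot[\Phi:\Phi\cap N]<\infty$, i.e. ${\rm Out}(W_\Gamma)$ is finite.

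The proof is therefore an assembly of standard ingredients: the Mihalik--Tschantz classification, the stability of virtual freeness under passage to finite-index sub- and overgroups and under finite central extensions, the finiteness of $Z(W_\Gamma)$, and Greenberg's theorem. The step I expect to require the most care is the finiteness of ${\rm Out}(W_\Gamma)$ in (i)$\Rightarrow$(ii): the argument via Greenberg's theorem is clean, but one must be attentive that ${\rm Aut}(W_\Gamma)$ need not be finitely generated a priori, which is precisely why I intersect with both the finite-index free subgroup $\Phi$ and the normal subgroup $N={\rm Inn}(W_\Gamma)$ rather than arguing on ${\rm Aut}(W_\Gamma)$ directly. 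An alternative route to the same conclusion runs through Stallings' ends theorem: an infinite virtually free group has more than one end, and a finitely generated group with more than one end admits no finitely generated infinite normal subgroup of infinite index, so $N$ must have finite index in ${\rm Aut}(W_\Gamma)$.
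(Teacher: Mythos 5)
Your proof is correct, but it takes a genuinely different route from the paper's. The paper proves Proposition E by running the same argument as for Proposition B: for (i)$\Rightarrow$(ii) it deduces virtual freeness of $W_\Gamma$ from that of ${\rm Inn}(W_\Gamma)\cong W_\Gamma/Z(W_\Gamma)$ (using the Dicks--Dunwoody subgroup lemma, finiteness of $Z(W_\Gamma)$ by Hosaka, and stability under quotients by finite normal subgroups), then invokes Mihalik--Tschantz for the graph conditions; the finiteness of ${\rm Out}(W_\Gamma)$, and the entire converse (ii)$\Rightarrow$(i), are obtained from Pettet's theorem that for a finitely generated virtually free group $G$, ${\rm Aut}(G)$ is virtually free if and only if ${\rm Out}(G)$ is finite. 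You avoid Pettet altogether: your (ii)$\Rightarrow$(i) is the elementary observation that finiteness of the center makes ${\rm Inn}(W_\Gamma)$ virtually free and finiteness of ${\rm Out}$ makes it of finite index, and your (i)$\Rightarrow$(ii) establishes finiteness of ${\rm Out}(W_\Gamma)$ directly via Greenberg's theorem applied to $\Phi\cap{\rm Inn}(W_\Gamma)$ inside a finite-index free subgroup $\Phi$. This is in effect a self-contained proof of the relevant special case of Pettet's result, made possible by the finiteness of $Z(W_\Gamma)$; what it buys is independence from \cite{Pettet}, at the cost of a longer argument and reliance on Greenberg's theorem (which, note, is usually stated for free groups of finite rank, but holds in arbitrary rank, and in your setting the finite generation of $\Phi\cap{\rm Inn}(W_\Gamma)$ together with its finite index forces $\Phi$ to be of finite rank anyway, so there is no gap). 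Two minor remarks: you re-derive the finiteness of $Z(W_\Gamma)$ from the decomposition into irreducible factors, whereas you could simply cite the paper's Lemma \ref{center} (Hosaka); and your central-extension splitting argument for recovering virtual freeness of $W_\Gamma$ from that of $W_\Gamma/Z(W_\Gamma)$ can be replaced by the paper's Lemma \ref{quotientVF}.
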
 

\section{Preliminaries} 
\subsection{Word hyperbolic groups}

In this subsection we define word hyperbolic groups and collect some important properties of these groups which we will need to prove our main theorems. A detailed description of word hyperbolic groups can be found in \cite{Bridson} and \cite{Gromov}.
We begin with a definition of a $\delta$-hyperbolic space.
\begin{definition}
Let $\delta\geq 0$. A geodesic triangle in a metric space is said to be $\delta$-slim if each of its sides is contained in the $\delta$-neighbourhood of the union of the other two sides. A geodesic space $X$ is said to be $\delta$-hyperbolic if every triangle in $X$ is $\delta$-slim. If $X$ is $\delta$-hyperbolic for some $\delta\geq 0$, we say that $X$ is word hyperbolic.
\end{definition}
A crucial property is that word hyperbolicity is stable under quasi-isometries \cite[III H.1.9]{Bridson}. Hence the following definition does not depend on the choice of a generating set.
\begin{definition}
A finitely generated group is called word hyperbolic (often abbreviated to hyperbolic) if its Cayley graph is $\delta$-hyperbolic metric space for some $\delta\geq 0$. 
\end{definition}
Classical examples of word hyperbolic
groups are finite groups and free groups. The standard example of a not word hyperbolic group is the direct product 
$\mathbb{Z}\times\mathbb{Z}$ of two infinite cyclic groups.

In order to prove that a given group $G$ is not word hyperbolic, it is enough to show that $G$ has a subgroup isomorphic to $\mathbb{Z}\times\mathbb{Z}$.
\begin{lemma}(\cite[III $\Gamma$.3.10]{Bridson})
\label{ZZ}
Let $G$ be a group. If $G$ is word hyperbolic, then $G$ does not contain a subgroup isomorphic to $\mathbb{Z}\times\mathbb{Z}$.
\end{lemma}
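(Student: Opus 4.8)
The plan is to argue by contradiction and reduce the statement to the structural fact that in a word hyperbolic group the centralizer of an infinite order element is virtually cyclic. So suppose $G$ is $\delta$-hyperbolic and that $H \le G$ with $H \cong \mathbb{Z}\times\mathbb{Z}$. Let $a$ be a generator of one of the two infinite cyclic factors of $H$. Then $a$ has infinite order and every element of $H$ commutes with $a$, so $H \subseteq C_G(a)$. Since a finite index subgroup of $\mathbb{Z}\times\mathbb{Z}$ is again isomorphic to $\mathbb{Z}\times\mathbb{Z}$ and hence never cyclic, the group $H$ is not virtually cyclic, and therefore neither is $C_G(a)$. Thus it suffices to prove that $C_G(a)$ is virtually cyclic for every infinite order element $a$; this is the heart of the matter.

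To analyze $C_G(a)$ I would work in the Cayley graph $X$ of $G$ with respect to a finite generating set, on which $G$ acts properly, cocompactly and by isometries, and which is $\delta$-hyperbolic. The first step is to record that a cyclic subgroup generated by an infinite order element is quasi-isometrically embedded: there are constants $\lambda \ge 1$, $\varepsilon \ge 0$ with $d(1, a^n) \ge \lambda^{-1}|n| - \varepsilon$, so that $n \mapsto a^n$ traces a quasi-geodesic line $\gamma_a$ in $X$ with two distinct endpoints $a^{+\infty}, a^{-\infty}$ in the boundary $\partial X$. This relies on cocompactness of the action and is part of the cited machinery.

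The second step is that any $h \in C_G(a)$ coarsely preserves this axis. Since $h$ commutes with $a$, the translated quasi-geodesic $h\cdot\gamma_a$ is again an axis for $a$ and shares its endpoints at infinity; by stability of quasi-geodesics in a hyperbolic space (the Morse lemma), $h\cdot\gamma_a$ lies in a uniform $K$-neighbourhood of $\gamma_a$, with $K$ depending only on $\delta, \lambda, \varepsilon$. In particular $C_G(a)$ fixes the pair $\{a^{+\infty}, a^{-\infty}\} \subseteq \partial X$ setwise and coarsely translates along $\gamma_a$.

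The final and hardest step is to convert this coarse action on the axis into an honest virtually cyclic structure. Measuring the signed displacement of a basepoint of $\gamma_a$ under each $h \in C_G(a)$ yields a map $\tau\colon C_G(a)\to\mathbb{R}$ which, because the action is only coarse, is a priori merely a quasimorphism; the work is to show that its image is discrete (hence infinite cyclic or trivial) and that its kernel consists of elements moving $\gamma_a$ only a bounded amount, which by properness of the $G$-action on $X$ is finite. Together these give that $C_G(a)$ is virtually cyclic, contradicting the reduction and proving the lemma. The main obstacle is exactly this last promotion from coarse geometry to genuine algebra: controlling the bounded ambiguities so that ``translation length along the axis'' becomes a well defined homomorphism with finite kernel. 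This is precisely what the north--south dynamics of $a$ on $\partial X$ together with the properness of the action, as packaged in the cited Bridson--Haefliger result, are needed for.
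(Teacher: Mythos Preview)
The paper does not give its own proof of this lemma; it simply records it as a citation to \cite[III $\Gamma$.3.10]{Bridson} and uses it as a black box. So there is no argument in the paper to compare against.

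That said, your sketch is correct and is essentially the proof one finds in the cited reference: reduce to the statement that the centralizer of an infinite-order element in a word hyperbolic group is virtually cyclic, and establish the latter via undistortion of cyclic subgroups, stability of quasi-geodesics (the Morse lemma), and properness of the action on the Cayley graph. The one place where you are appropriately cautious is the ``promotion from coarse geometry to genuine algebra'' in the last step; in Bridson--Haefliger this is handled not by building a homomorphism directly from displacement but by combining the positivity of the stable translation length with a pigeonhole/properness argument to bound the index of $\langle a\rangle$ in $C_G(a)$. Either packaging works.
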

In some classes of groups this is the only obstruction. The next results follow from the fact that every subgroup $H\subseteq G$ of finite index is quasi-isometric to $G$ and two groups which differ by finite groups are also quasi-isometric \cite[p. 138]{OfficeHours}. 
\begin{lemma}
\label{quotient}
Let $G$ be a finitely generated group, let $H$ be a subgroup of finite index and $N$ be a finite normal subgroup.
\begin{enumerate}
\item[(i)] The group $H$ is word hyperbolic if and only if  $G$ is word hyperbolic. 
\item[(ii)] The group $G/N$ is word hyperbolic if and only if $G$ is word hyperbolic.
\end{enumerate}
\end{lemma}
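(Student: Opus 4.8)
The plan is to reduce both statements to the quasi-isometry invariance of word hyperbolicity, already recorded above as \cite[III H.1.9]{Bridson}. Since word hyperbolicity is by definition a property of \emph{finitely generated} groups, I would first note that both $H$ and $G/N$ are finitely generated: the subgroup $H$ inherits finite generation from $G$ because it has finite index (Reidemeister--Schreier), while $G/N$ is a quotient of the finitely generated group $G$. With finite generating sets fixed, the word metrics are defined and the respective Cayley graphs are geodesic metric spaces; each group acts on its own Cayley graph freely, properly and cocompactly by left multiplication, so a group is word hyperbolic precisely when its Cayley graph is $\delta$-hyperbolic for some $\delta\geq 0$.

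For part (i) the core step is to exhibit a quasi-isometry between $H$ and $G$, which is exactly the cited fact that a finite-index subgroup is quasi-isometric to the ambient group \cite[p. 138]{OfficeHours}. Concretely I would argue via the Milnor--\v{S}varc lemma: let $X$ be the Cayley graph of $G$ with respect to a finite generating set and let $H$ act on $X$ by left multiplication. This action is isometric and free, and it is cocompact precisely because $H$ has finite index, so the quotient graph $H\backslash X$ has only $[G:H]$ vertices. Hence $H$ is quasi-isometric to $X$, and $X$ is quasi-isometric to $G$; transitivity gives that $H$ and $G$ are quasi-isometric, and invariance of hyperbolicity yields the equivalence.

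For part (ii) I would run the same argument with the finite normal subgroup in the role that finite index played before, giving the second cited fact that groups differing by a finite group are quasi-isometric \cite[p. 138]{OfficeHours}. Let $\bar S$ be the image in $G/N$ of a finite generating set of $G$ and let $Y$ be the Cayley graph of $G/N$ with respect to $\bar S$. The group $G$ acts on $Y$ through the projection $G\to G/N$ by left multiplication; this action is isometric and vertex-transitive (hence cocompact), and the stabilizer of the base vertex is exactly $N$, which is finite, so the action is proper. By Milnor--\v{S}varc, $G$ is quasi-isometric to $Y$, which is quasi-isometric to $G/N$, and once more invariance of hyperbolicity closes the argument.

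I do not expect a genuine obstacle here: the content is entirely the verification of the hypotheses of the Milnor--\v{S}varc lemma, namely properness and cocompactness of the two actions, after which the cited invariance theorem settles both parts. The only points deserving explicit mention are the finite generation of $H$ and of $G/N$, and the observation that the finiteness of $N$ is exactly what makes the vertex stabilizer in (ii) finite and thus the action proper.
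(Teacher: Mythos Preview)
Your proposal is correct and follows exactly the approach the paper indicates: the paper simply asserts that the lemma follows from the facts (cited from \cite[p.~138]{OfficeHours}) that a finite-index subgroup is quasi-isometric to the ambient group and that groups differing by a finite normal subgroup are quasi-isometric, combined with the quasi-isometry invariance of hyperbolicity \cite[III~H.1.9]{Bridson}. You have merely unpacked these cited facts via the Milnor--\v{S}varc lemma, which is the standard way to justify them.
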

Let $G$ be a group. We denote by $Z(G)$ the center of $G$, by ${\rm Inn}(G)$ the subgroup of ${\rm Aut}(G)$ consisting of inner automorphisms and by ${\rm Out}(G):={\rm Aut}(G)/{\rm Inn}(G))$ the outer automorphism group of $G$. 

The next result follows immediately from Lemma \ref{quotient}.
\begin{corollary}
\label{finiteOut}
Let $G$ be a word hyperbolic group. If $Z(G)$ and ${\rm Out}(G)$ are finite, then ${\rm Aut}(G)$ is word hyperbolic. 
\end{corollary}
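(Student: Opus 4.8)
The plan is to realize ${\rm Inn}(G)$ as a word hyperbolic subgroup of finite index in ${\rm Aut}(G)$ and then to conclude by Lemma \ref{quotient}. The two short exact sequences I would work with are
$$1 \longrightarrow Z(G) \longrightarrow G \longrightarrow {\rm Inn}(G) \longrightarrow 1 \quad\text{and}\quad 1 \longrightarrow {\rm Inn}(G) \longrightarrow {\rm Aut}(G) \longrightarrow {\rm Out}(G) \longrightarrow 1,$$
the first coming from the isomorphism ${\rm Inn}(G)\cong G/Z(G)$ and the second from the definition of ${\rm Out}(G)$.

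First I would deal with finite generation, since Lemma \ref{quotient} is formulated for finitely generated groups. As $G$ is word hyperbolic it is finitely generated, and hence so is the quotient ${\rm Inn}(G)\cong G/Z(G)$. The second sequence exhibits ${\rm Inn}(G)$ as a subgroup of index $|{\rm Out}(G)|<\infty$ in ${\rm Aut}(G)$; since a group containing a finitely generated subgroup of finite index is itself finitely generated, ${\rm Aut}(G)$ is finitely generated.

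Next I would verify that ${\rm Inn}(G)$ is word hyperbolic. Here $Z(G)$ is by hypothesis a finite normal subgroup of the word hyperbolic group $G$, so Lemma \ref{quotient}(ii) applied to the quotient $G/Z(G)\cong{\rm Inn}(G)$ shows that ${\rm Inn}(G)$ is word hyperbolic. To finish, ${\rm Inn}(G)$ is then a word hyperbolic subgroup of finite index in the finitely generated group ${\rm Aut}(G)$, so Lemma \ref{quotient}(i) yields that ${\rm Aut}(G)$ is word hyperbolic. I do not expect any genuine obstacle in this argument; the only point that needs a little care is checking finite generation of ${\rm Aut}(G)$ so that the lemma can legitimately be applied.
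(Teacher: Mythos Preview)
Your argument is correct and follows essentially the same route as the paper: use Lemma \ref{quotient}(ii) to pass from $G$ to ${\rm Inn}(G)\cong G/Z(G)$, and then Lemma \ref{quotient}(i) to pass from ${\rm Inn}(G)$ to ${\rm Aut}(G)$ via the finite-index inclusion. The only difference is that you explicitly verify finite generation of ${\rm Aut}(G)$ before invoking the lemma, a point the paper's proof leaves implicit.
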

\begin{proof}
By assumption the group $G$ is word hyperbolic and $Z(G)$ is finite.
Hence the word hyperbolicity of $G/Z(G)\cong{\rm Inn}(G)$ follows from Lemma \ref{quotient}(ii). The group ${\rm Inn}(G)$ has finite index in ${\rm Aut}(G)$, therefore it follows from Lemma  \ref{quotient}(i) that ${\rm Aut}(G)$ is word hyperbolic.
\end{proof}

\subsection{Virtually free groups}

In this subsection we introduce virtually free groups and we collect some facts about these groups which we will need to prove our results. 
\begin{definition}
A group $G$ is called virtually free if it contains a free subgroup of finite index. 
\end{definition}
It is obvious that finite groups and free groups are virtually free. The following result is a direct consequence of Lemma \ref{quotient}(i).
\begin{corollary}
\label{virhyp}
Let $G$ be a finitely generated group. If $G$ is virtually free, then $G$ is word hyperbolic.
\end{corollary}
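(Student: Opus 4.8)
The plan is to exploit the definition of virtual freeness together with Lemma \ref{quotient}(i). Since $G$ is virtually free, by definition it contains a free subgroup $H$ of finite index. The strategy is first to establish that $H$ is itself word hyperbolic, and then to transfer word hyperbolicity from the finite-index subgroup $H$ up to $G$ via Lemma \ref{quotient}(i).

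First I would verify that $H$ is finitely generated. This follows from the standard fact that a subgroup of finite index in a finitely generated group is again finitely generated (Reidemeister--Schreier): here $G$ is finitely generated by hypothesis and $[G:H]<\infty$. Hence $H$ is a finitely generated free group, say of rank $n$. Next I would show that every finitely generated free group is word hyperbolic. The Cayley graph of $H$ with respect to a free generating set of size $n$ is the $2n$-regular tree. In a tree any geodesic triangle is a tripod, so each of its sides is contained in the union of the other two; thus every triangle is $0$-slim, the tree is $0$-hyperbolic, and $H$ is word hyperbolic directly from the definition.

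Finally, $H$ is a word hyperbolic subgroup of finite index in $G$, so Lemma \ref{quotient}(i) immediately gives that $G$ is word hyperbolic, completing the argument. I do not anticipate a genuine obstacle; the only point requiring care is the appeal to Reidemeister--Schreier, which is needed to guarantee that the free subgroup inherits finite generation, since word hyperbolicity is formulated for finitely generated groups. An alternative, equally short route would bypass the explicit rank by noting that $H$ is a finite-index subgroup of the finitely generated group $G$, so $H$ is quasi-isometric to $G$; once one knows $H$ is word hyperbolic this again yields the claim, but invoking Lemma \ref{quotient}(i) packages exactly this reasoning.
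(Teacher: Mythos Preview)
Your proposal is correct and follows essentially the same route as the paper: the paper simply states that the corollary is a direct consequence of Lemma~\ref{quotient}(i), and you have spelled out exactly that deduction, supplying the details that the free finite-index subgroup is finitely generated and that free groups are $0$-hyperbolic. There is nothing to add.
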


We will also use the following property of virtually free groups which follows from \cite[IV 1.9]{Dicks}.
\begin{lemma}
\label{Dicks}
Let $G$ be a finitely generated virtually free group and $H\subseteq G$ be a subgroup. If $H$ is finitely generated, then $H$ is also virtually free. 
\end{lemma}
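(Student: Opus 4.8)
The goal is to exhibit a free subgroup of finite index inside $H$. The most economical route avoids the tree machinery entirely and rests on the Nielsen--Schreier theorem. First I would use the hypothesis that $G$ is virtually free to fix a free subgroup $F\subseteq G$ with $[G:F]<\infty$, and then set $K:=H\cap F$. Everything reduces to understanding this intersection.

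Next I would verify two facts about $K$. Since $K\subseteq F$ and every subgroup of a free group is free by the Nielsen--Schreier theorem, $K$ is free. Moreover, the assignment $h(H\cap F)\mapsto hF$ defines an injection of the coset space $H/(H\cap F)$ into the coset space $G/F$, so $[H:K]\le[G:F]<\infty$. Thus $K$ is a free subgroup of finite index in $H$, and $H$ is virtually free by definition. If one additionally wants $K$ to be finitely generated, observe that $H$ being finitely generated forces its finite-index subgroup $K$ to be finitely generated as well, so $K$ is in fact a free group of finite rank.

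The route suggested by the cited reference \cite{Dicks} proceeds instead through Bass--Serre theory: by the Dicks--Dunwoody characterization, the finitely generated virtually free group $G$ acts without inversions on a tree $T$ with all vertex stabilizers finite. Restricting this action to $H$ yields an action of $H$ on the same tree $T$, still without inversions, whose vertex stabilizers are of the form $H\cap G_v\subseteq G_v$ and hence finite. Applying the characterization in the reverse direction to $H$ then gives that $H$ is virtually free. The one point that genuinely requires care — and the only place the finite-generation hypothesis on $H$ is actually used in this second approach — is precisely this reverse implication: a group acting on a tree with finite vertex stabilizers need not be virtually free if it fails to be finitely generated, as an infinite free product of finite groups shows. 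In the Nielsen--Schreier approach this subtlety does not arise at all, so I would present that argument as the main one and invoke the tree-theoretic version only to align with the reference.
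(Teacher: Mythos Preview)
Your argument is correct. The paper does not actually supply a proof of this lemma; it simply records that the statement follows from \cite[IV 1.9]{Dicks}, i.e., from the Dicks--Dunwoody characterization of finitely generated virtually free groups via actions on trees with finite vertex stabilizers. Your second paragraph reconstructs exactly that argument and correctly identifies where the finite-generation hypothesis on $H$ is used.

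Your primary route via Nielsen--Schreier is genuinely different and strictly more elementary: intersecting $H$ with a finite-index free subgroup $F\subseteq G$ immediately gives a free subgroup of finite index in $H$, with no tree machinery required. As you observe, this argument never uses that $H$ is finitely generated, so it in fact establishes the stronger statement that \emph{every} subgroup of a virtually free group is virtually free. The Dicks--Dunwoody approach, by contrast, needs the finite-generation hypothesis to re-apply the characterization to $H$. What the tree-theoretic route buys is alignment with the reference the paper invokes and, more broadly, a framework that generalizes to other accessibility-type results; what your Nielsen--Schreier route buys is a self-contained two-line proof of a sharper statement.
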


Being virtually free is stable under quasi-isometries. Thus we obtain the following
\begin{lemma}
\label{quotientVF}
Let $G$ be a finitely generated group and $N$ be a finite normal subgroup.
The group $G/N$ is virtually free if and only if $G$ is virtually free.
\end{lemma}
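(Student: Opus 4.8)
The plan is to prove both implications directly, relying only on facts already available in the excerpt—chiefly the Dicks--Dunwoody characterization quoted in the introduction (a finitely generated group is virtually free if and only if it acts without inversions on a tree with finite vertex stabilizers) together with elementary index arithmetic. The author's framing suggests the one-line route via quasi-isometry invariance: the projection $\pi\colon G\to G/N$ has finite kernel $N$ and is therefore a quasi-isometry for word metrics, and virtual freeness is a quasi-isometry invariant, so the two groups are virtually free together. I prefer the hands-on argument below, since it is self-contained and isolates where the actual content sits.

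For the implication ``$G$ virtually free $\Rightarrow G/N$ virtually free'', I would take a free subgroup $F\leq G$ of finite index and consider its image $\pi(F)=FN/N\cong F/(F\cap N)$. Since $N$ is finite, $F\cap N$ is a finite subgroup of the free group $F$, hence trivial (free groups are torsion-free), so $\pi(F)$ is free. Its index in $G/N$ equals $[G:FN]$, which divides $[G:F]<\infty$; thus $\pi(F)$ is a free subgroup of finite index in $G/N$.

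For the converse ``$G/N$ virtually free $\Rightarrow G$ virtually free'', let $\bar{F}\leq G/N$ be free of finite index and set $H:=\pi^{-1}(\bar{F})$. Since $\pi$ induces a bijection between the cosets of $H$ in $G$ and those of $\bar{F}$ in $G/N$, we get $[G:H]=[G/N:\bar{F}]<\infty$, so $H$ is finitely generated (finite-index subgroup of the finitely generated $G$). As $N=\ker\pi\subseteq H$, the restriction $\pi|_H\colon H\to\bar{F}$ is surjective with kernel $N$, exhibiting $H$ as a (finite)-by-(free) group. Now $\bar{F}$, being free, acts freely and without inversions on a tree $T$, namely its Cayley graph with respect to a free basis. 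Letting $H$ act on $T$ through $\pi|_H$, the stabilizer of any vertex $v$ is $\{h\in H : \pi(h)\in{\rm Stab}_{\bar{F}}(v)\}=\{h\in H : \pi(h)=1\}=N$, which is finite. By the Dicks--Dunwoody criterion, $H$ is virtually free; choosing a free subgroup $F'\leq H$ of finite index yields $[G:F']=[G:H]\,[H:F']<\infty$, so $G$ is virtually free.

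The main obstacle is the converse direction, where the real content is the assertion that a (finite)-by-(free) group is virtually free; I would not prove this by hand but deduce it from the quoted tree characterization as above, which is exactly what the quasi-isometry-invariance statement silently packages. Two routine points need checking: that $H$ is finitely generated (automatic from finite index in $G$) and that the action on $T$ is without inversions (guaranteed by the free Cayley-graph action, or by barycentric subdivision if one works with a different tree). Everything else is bookkeeping with indices.
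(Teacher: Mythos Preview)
Your proof is correct, and you have already correctly identified the difference from the paper: the paper gives no argument beyond the sentence ``being virtually free is stable under quasi-isometries,'' relying on the (deep) quasi-isometry invariance of virtual freeness together with the fact, recorded just before Lemma~\ref{quotient}, that $G$ and $G/N$ are quasi-isometric when $N$ is finite.

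Your route is genuinely different and more transparent. The forward implication is entirely elementary (just the observation that a finite subgroup of a free group is trivial, plus index bookkeeping), whereas the quasi-isometry argument already hides nontrivial content even in this direction. For the converse you reduce to showing that a finitely generated (finite)-by-(free) group is virtually free and deduce this from the Dicks--Dunwoody tree criterion quoted in the introduction, via the pulled-back action on the Cayley tree of $\bar{F}$; this is exactly the right lever, and since that action is vertex-transitive (hence cocompact) there is no subtlety in applying the criterion. The trade-off is clear: the paper's approach is a one-line citation that packages Stallings--Dunwoody accessibility inside the phrase ``QI-invariant,'' while your argument is longer but isolates precisely which ingredient (the tree characterization) is doing the work and avoids invoking the full QI-rigidity statement.
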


We end this section with the following result which is crucial for our proofs of Theorem B and Proposition E.
\begin{proposition}(\cite[3.1]{Pettet})
\label{Pettet}
Let $G$ be a finitely generated and virtually free group. Then ${\rm Aut}(G)$ is virtually free if and only if ${\rm Out}(G)$ is finite.
\end{proposition}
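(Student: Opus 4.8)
The plan is to study the short exact sequence
$1\to{\rm Inn}(G)\to{\rm Aut}(G)\to{\rm Out}(G)\to 1$
and to exploit that ${\rm Inn}(G)\cong G/Z(G)$ is itself finitely generated and virtually free. First I would dispose of the degenerate cases. If $G$ is finite, then ${\rm Aut}(G)$ and ${\rm Out}(G)$ are finite and both sides of the equivalence hold. Next I would record that $Z(G)$ is finite unless $G$ is two-ended: an infinite centre would contain a central element $z$ of infinite order, and intersecting $\langle z\rangle$ with a free finite-index subgroup $F\le G$ produces a nontrivial central subgroup of the free group $F$, forcing $F\cong\mathbb{Z}$ and hence $G$ virtually cyclic. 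The two-ended case can be settled by hand: there ${\rm Aut}(G)$ is again virtually cyclic (hence virtually free) and ${\rm Out}(G)$ is finite, so the equivalence is trivially true. From then on I would assume $G$ infinite and not two-ended, so that $Z(G)$ is finite and, by Lemma \ref{quotientVF}, ${\rm Inn}(G)\cong G/Z(G)$ is an \emph{infinite}, finitely generated, virtually free group.

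For the implication ``${\rm Out}(G)$ finite $\Rightarrow$ ${\rm Aut}(G)$ virtually free'' I would argue directly. Finiteness of ${\rm Out}(G)$ says precisely that ${\rm Inn}(G)$ has finite index in ${\rm Aut}(G)$. Since ${\rm Inn}(G)$ is virtually free it contains a free subgroup $F_0$ of finite index; then $F_0$ has finite index in ${\rm Aut}(G)$, so ${\rm Aut}(G)$ is virtually free. This uses only transitivity of the finite-index relation, and no appeal to quasi-isometry is needed.

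The converse is where the real content lies. Assuming ${\rm Aut}(G)$ virtually free, the point is that ${\rm Inn}(G)$ is a finitely generated, infinite, normal subgroup of the virtually free group ${\rm Aut}(G)$, and I want to conclude that it has finite index. Choosing a free finite-index subgroup $\Phi\le{\rm Aut}(G)$ and setting $N:={\rm Inn}(G)\cap\Phi$, the subgroup $N$ is normal in $\Phi$, has finite index in ${\rm Inn}(G)$ (hence is finitely generated and infinite), and is nontrivial. Greenberg's theorem, that a nontrivial finitely generated normal subgroup of a free group has finite index, then forces $N$ to have finite index in $\Phi$, whence ${\rm Inn}(G)$ has finite index in ${\rm Aut}(G)$ and ${\rm Out}(G)$ is finite.

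I expect the ``only if'' direction to be the main obstacle, and more precisely the input that a finitely generated infinite normal subgroup of a virtually free group has finite index: this is exactly where genuine structure theory enters, imported from free groups via the intersection with $\Phi$ (and, if one prefers, combined with Lemma \ref{Dicks} to keep the relevant subgroups virtually free). The remaining delicate bookkeeping is the behaviour of $Z(G)$, since it is the size of the centre that separates the elementary two-ended case from the generic one; everything else is formal manipulation of the inner--outer exact sequence. As a sanity check, $G=F_2$ fits the pattern: ${\rm Out}(F_2)\cong{\rm GL}_2(\mathbb{Z})$ is infinite, and correspondingly ${\rm Aut}(F_2)$ fails to be virtually free.
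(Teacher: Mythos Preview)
The paper does not prove this proposition at all; it is quoted verbatim from \cite[3.1]{Pettet} and used as a black box in the proofs of Proposition~B and Proposition~E. So there is no ``paper's own proof'' to compare against.

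That said, your argument is sound and would stand as a self-contained proof. The short exact sequence $1\to{\rm Inn}(G)\to{\rm Aut}(G)\to{\rm Out}(G)\to 1$ together with ${\rm Inn}(G)\cong G/Z(G)$ is the right framework; the observation that $Z(G)$ is finite outside the two-ended case is correct (any infinite-order central element meets a free finite-index subgroup in a nontrivial central element, forcing that free subgroup to be $\mathbb{Z}$); and the heart of the converse---intersecting ${\rm Inn}(G)$ with a free finite-index $\Phi\le{\rm Aut}(G)$ and invoking the Karrass--Solitar/Greenberg theorem that a nontrivial finitely generated normal subgroup of a free group has finite index---is exactly the right mechanism. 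Note that this theorem holds for free groups of arbitrary rank, so you need not worry about whether $\Phi$ itself is finitely generated.

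The only place that deserves a little more than ``settled by hand'' is the two-ended case. Your assertion that ${\rm Aut}(G)$ is virtually cyclic and ${\rm Out}(G)$ is finite there is true, but it is not a one-liner. One clean way: a two-ended $G$ has a characteristic maximal finite normal subgroup $K$ with $G/K\cong\mathbb{Z}$ or $D_\infty$. If $G/K\cong\mathbb{Z}$ then $G\cong K\rtimes\mathbb{Z}$ and ${\rm Aut}(G)$ is finite (any automorphism is determined by its restriction to $K$ and the image of a generator of $\mathbb{Z}$, which lies in $Ka^{\pm1}$). If $G/K\cong D_\infty$ one checks that $Z(G)\subseteq K$ is finite, so this case is already covered by your main argument. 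Either way ${\rm Out}(G)$ is finite and the equivalence holds. With this small elaboration the proof is complete.
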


\section{Graph products of groups}
In this section we briefly present the main definitions and properties concerning graph products of groups. These groups are defined by presentations of a special form. A simplicial graph 
$\Gamma=(V,E)$ consists of a set $V$ of vertices and a set $E$ of two element subsets of $V$ which  are  called edges. If $\Gamma'=(V', E')$ is a subgraph of $\Gamma$ and $E'$ contains all the edges $\left\{v, w\right\}\in E$ with $v, w\in V'$, then $\Gamma'$ is called an induced subgraph of $\Gamma$. For $V'\subseteq V$ we denote by $\langle V'\rangle$ the smallest induced subgraph of $\Gamma$ with $(V',\emptyset)\subseteq \langle V'\rangle$. This subgraph is called a graph generated by $V'$. For a vertex $v\in V$ we define its link as ${\rm lk}(v)=\left\{w\in V\mid \left\{v,w\right\}\in E\right\}$ and its star as ${\rm st}(v)=\left\{v\right\}\cup {\rm lk}(v)$. A path of length $n$ is a graph $P_n=(V, E)$ of the form  $V=\left\{v_0, \ldots, v_n\right\}$ and $E=\left\{\left\{v_0, v_1\right\}, \left\{v_1, v_2\right\}, \ldots, \left\{v_{n-1}, v_n\right\}\right\}$ where the $v_i$, $0\leq i\leq n$, are pairwise distinct. If $P_n=(V,E)$ is a path of length $n\geq 3$, then the graph $C_{n+1}:=(V, E\cup\left\{\left\{v_n, v_0\right\}\right\})$ is called a cycle of length $n+1$. The distance between two vertices $u, v\in V$, denoted by $d(u,v)$, is the length of a shortest path from $u$ to $v$. A graph $\Gamma=(V,E)$ is called connected if any two vertices $v, w\in V$ are contained in a subgraph $\Gamma'$ of $\Gamma$ such that $\Gamma'$ is a path. A maximal connected subgraph of $\Gamma$ is called a connected component of $\Gamma$. A graph $\Gamma$ is called chordal if $\Gamma$ contains no induced cycles of length $\geq 4$.

The following definition is crucial for our results.
\begin{definition}
\label{SIL}
A graph $\Gamma=(V, E)$ has a separating intersection of links (abbreviated SIL) if there exist two vertices $v, w\in V$ with $d(v,w)\geq 2$ and there is a connected component of $\langle V-({\rm lk}(v)\cap{\rm lk}(w))\rangle$ which contains neither $v$ nor $w$.
\end{definition}

A graph with no SILs is either connected or it is the disjoint union of two complete graphs, see \cite[3.3]{Charney}.
Further, if $\Gamma$ is a tree, then $\Gamma$ contains no SIL if and only if the valence of every vertex is less than $3$.

\begin{definition}
Let $\Gamma=(V, E)$ be a finite simplicial graph and
 $\left\{G_v\mid v\in V\right\}$ be a set of non-trivial groups. The graph product $G_\Gamma$ is defined as the quotient
\[
({\Large{*}}_{v\in V}G_v)/ \langle\langle gh=hg\mid  g\in G_v, h\in G_w, \left\{ v, w\right\}\in E\rangle\rangle.
\] 
\end{definition}
Let us consider some examples. If $\Gamma$ is a discrete graph, then $G_\Gamma$ is a free product of $G_v$, $v\in V$ and if $\Gamma$ is a complete graph, then $G_\Gamma$ is a direct product of $G_v, v\in V$. 
Further, if all $G_v$ are infinite cyclic, then $G_\Gamma$ is called a right angled Artin group and if all $G_v$ have order two, then $G_\Gamma$ is known as a right angled Coxeter group.

If $\Gamma$ is not complete, then $G_\Gamma$ is always infinite. This follows from the following result.
\begin{lemma}(\cite{Green})
Let $G_\Gamma$ be a graph product and $\Gamma'\subseteq\Gamma$ a  subgraph. If $\Gamma'$ is an induced subgraph, then $G_{\Gamma'}$ is a subgroup of $G_\Gamma$ and is called a parabolic subgroup.
\end{lemma}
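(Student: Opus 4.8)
The plan is to realize $G_{\Gamma'}$ as a subgroup of $G_\Gamma$ by exhibiting a natural homomorphism $\iota\colon G_{\Gamma'}\to G_\Gamma$ together with a retraction, i.e.\ a homomorphism $\rho\colon G_\Gamma\to G_{\Gamma'}$ with $\rho\circ\iota=\mathrm{id}_{G_{\Gamma'}}$. Once such a $\rho$ exists, $\iota$ is automatically injective, so $\iota(G_{\Gamma'})$ is a subgroup of $G_\Gamma$ isomorphic to $G_{\Gamma'}$, which is exactly the assertion that $G_{\Gamma'}$ is a subgroup of $G_\Gamma$.

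First I would construct $\iota$. Both $G_{\Gamma'}$ and $G_\Gamma$ are given by presentations built from the same vertex groups $G_v$, $v\in V'$, so I define $\iota$ on generators by sending each $g\in G_v$ (for $v\in V'$) to the corresponding element $g\in G_v\subseteq G_\Gamma$. To see that this respects the defining relations of $G_{\Gamma'}$ it suffices to check the commutation relations: if $\{v,w\}\in E'$ then $v,w\in V'$ are adjacent in $\Gamma'$, and since $\Gamma'$ is an induced subgraph we have $\{v,w\}\in E$, so the images of $G_v$ and $G_w$ already commute in $G_\Gamma$. Hence $\iota$ is a well-defined homomorphism.

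Next I would build the retraction $\rho\colon G_\Gamma\to G_{\Gamma'}$ by the rule: for $v\in V'$ send $G_v$ identically onto $G_v\subseteq G_{\Gamma'}$, and for $v\in V\setminus V'$ collapse $G_v$ to the identity. I then verify that $\rho$ respects the defining relations of $G_\Gamma$, which is the one place the hypothesis is genuinely used and hence the main point to get right. For an edge $\{v,w\}\in E$ there are two cases: if at least one of $v,w$ lies outside $V'$, then its image under $\rho$ is trivial and commutation is automatic; if both $v,w\in V'$, then because $\Gamma'$ is induced $\{v,w\}\in E'$, so the images of $G_v$ and $G_w$ already commute in $G_{\Gamma'}$. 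Thus $\rho$ is a well-defined homomorphism. Finally, $\rho\circ\iota$ restricts to the identity on each $G_v$ with $v\in V'$, and these subgroups generate $G_{\Gamma'}$, so $\rho\circ\iota=\mathrm{id}_{G_{\Gamma'}}$. This forces $\iota$ to be injective and completes the argument. I expect no serious obstacle beyond the bookkeeping in the case analysis for $\rho$; the essential observation is that the \emph{induced}-subgraph hypothesis is exactly what guarantees that no two generators retained by $\rho$ are forced to commute in $G_\Gamma$ without already commuting in $G_{\Gamma'}$.
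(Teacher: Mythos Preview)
Your argument is correct and is the standard retraction proof of this well-known fact. The paper does not supply a proof of its own; it simply cites the result from Green's thesis, so there is nothing to compare against. One small remark: the well-definedness of $\iota$ does not actually require $\Gamma'$ to be induced (any subgraph $\Gamma'\subseteq\Gamma$ has $E'\subseteq E$), whereas the induced hypothesis is genuinely needed for $\rho$, exactly as you point out in your final sentence.
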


In order to prove that many automorphism groups of graph products  have subgroups isomorphic to $\mathbb{Z}\times\mathbb{Z}$ we need a precise definition of some elements of ${\rm Aut}(G_\Gamma)$.
\begin{definition}
Let $G_\Gamma$ be a graph product, $v\in V$, $x\in G_v$ and $C=(V', E')$ be a connected component of a subgraph generated by $V-{\rm st}(v)$. The partial conjugation $\pi_{x, C}$ in an automorphism of $G_\Gamma$  induced by: 
\[h\mapsto \left\{
\begin{array}{ll}
xhx^{-1} & h\in G_w, w\in V' \\
h & h\in G_w, w\notin V'.  \\
\end{array}
\right. \]
\end{definition}
Now we are able to prove 

\begin{lemma}
\label{subgroupZZ}
Let $G_\Gamma$ be a graph product. If $\Gamma$ contains a SIL, then ${\rm Aut}(G_\Gamma)$ has a subgroup isomorphic to $\mathbb{Z}\times\mathbb{Z}$.
\end{lemma}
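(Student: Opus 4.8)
The plan is to extract from a SIL one inner automorphism of infinite order together with a commuting partial conjugation which is not a power of it; these two will generate a copy of $\mathbb{Z}\times\mathbb{Z}$ inside ${\rm Aut}(G_\Gamma)$.

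First I would fix vertices $v,w$ and a connected component $C$ of $\langle V-({\rm lk}(v)\cap{\rm lk}(w))\rangle$ witnessing the SIL, so that $d(v,w)\geq 2$, the component $C$ is non-empty, and $v,w\notin C$. Picking non-trivial elements $x\in G_v$ and $y\in G_w$, non-adjacency of $v$ and $w$ makes $\langle G_v,G_w\rangle$ the free product $G_v*G_w$, so $g:=xy$ has infinite order; let $\iota_g\in{\rm Inn}(G_\Gamma)$ denote conjugation by $g$. Next I would introduce the endomorphism $\psi$ of $G_\Gamma$ defined on the vertex groups by $\psi(h)=ghg^{-1}$ for $h\in G_u$ with $u\in C$, and $\psi(h)=h$ for $h\in G_u$ with $u\notin C$. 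The first point to settle is that $\psi$ is a well-defined automorphism: the only defining relations that need checking are the commutations $[a,b]=1$ with $a\in G_u$, $u\in C$, and $b\in G_{u'}$, $u'\notin C$, $\{u,u'\}\in E$; but since $C$ is a \emph{full} connected component of $\langle V-({\rm lk}(v)\cap{\rm lk}(w))\rangle$, any such $u'$ must lie in ${\rm lk}(v)\cap{\rm lk}(w)$, hence $G_{u'}$ commutes with both $x$ and $y$, so with $g$, and the relation is preserved. Conjugating the same vertex groups by $g^{-1}$ gives the inverse of $\psi$, and one may note that $\psi$ is a product of partial conjugations.

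I would then check that $\psi$ and $\iota_g$ commute. Since $v,w\notin C$ we have $\psi(g)=g$, and evaluating $\psi\circ\iota_g$ and $\iota_g\circ\psi$ on an element $h\in G_u$ yields $g^2hg^{-2}$ when $u\in C$ and $ghg^{-1}$ when $u\notin C$, in both orders. It then remains to prove $\langle\psi,\iota_g\rangle\cong\mathbb{Z}\times\mathbb{Z}$. If $\psi^m\circ\iota_g^n={\rm id}$, then evaluating at $x$ and using that $\psi$ fixes $x$ and $g$ gives $(xy)^nx(xy)^{-n}=x$ in $G_v*G_w$, which forces $n=0$ because the centralizer of $x$ in $G_v*G_w$ is contained in $G_v$ while $(xy)^n\notin G_v$ for $n\neq 0$; then $\psi^m={\rm id}$, and evaluating at a non-trivial $z\in G_u$ for a fixed $u\in C$ gives $(xy)^mz(xy)^{-m}=z$, which — since $x,y,z$ all lie in the parabolic subgroup $P$ generated by $G_u,G_v,G_w$, a free product of the form $G_u*G_v*G_w$, $G_v*(G_u\times G_w)$ or $G_w*(G_u\times G_v)$ according to the adjacencies among $u,v,w$, the relevant point being that $u$ is not adjacent to both $v$ and $w$ because $u\notin{\rm lk}(v)\cap{\rm lk}(w)$ — forces $m=0$ by a normal form computation in $P$. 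Hence ${\rm Aut}(G_\Gamma)$ contains a subgroup isomorphic to $\mathbb{Z}\times\mathbb{Z}$.

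The step I expect to be the main obstacle is this last one: verifying that $\langle\psi,\iota_g\rangle$ is genuinely free abelian of rank two rather than, say, cyclic or $\mathbb{Z}$ times a finite group, i.e. making the ``$m=0$'' conclusion rigorous. This is precisely where the feature distinguishing a SIL from a harmless pair of links — that $C$ is disjoint from ${\rm lk}(v)\cap{\rm lk}(w)$ — is used, both to produce $\psi$ and to see that $\psi$ is nontrivial modulo $\langle\iota_g\rangle$. The well-definedness of $\psi$ is the other place where the exact form of the SIL condition enters, though it is only a short verification.
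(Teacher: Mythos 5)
Your proposal is correct and is essentially the proof in the paper: your $\iota_g$ is the paper's inner automorphism $f$ by $xy$, and your $\psi$ is exactly the composition of partial conjugations $\pi_{x,C}\circ\pi_{y,C}$, so the generating pair is the same. The only difference is that you spell out the verifications (well-definedness of $\psi$, commutation, and the normal-form argument in the parabolic subgroup on $\{u,v,w\}$ forcing $m=n=0$) which the paper leaves as ``obvious.''
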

\begin{proof}
If $\Gamma$ contains a SIL, then there exist vertices $v, w\in V$ with $d(v, w)\geq 2$ and a connected component $C$ of $\langle V-{\rm lk}(v)\cap{\rm lk}(w)\rangle$ which contains neither $v$ nor $w$. Let $x\in G_v$ and $y\in G_w$ be non-trivial elements and $f$ denote the inner automorphism $g\mapsto xygy^{-1}x^{-1}$. It is obvious that the order of $f$ in infinite. Further the automorphism $\pi_{x, C}\circ \pi_{y, C}$ also has infinite order and $\langle f, \pi_{x, C}\circ\pi_{y, C}\rangle\cong\mathbb{Z}\times\mathbb{Z}$. 
\end{proof}

\begin{proposition}
\label{freeProducts}
Let $G_\Gamma$ be a graph product. 
\begin{enumerate}
\item[(i)] If $\Gamma=(V, \emptyset)$ and $\#V\geq 3$, then ${\rm Aut}(G_\Gamma)$ is not word hyperbolic.
\item[(ii)] If $\Gamma=(\left\{v, w\right\}, \emptyset)$ and the vertex groups $G_{v}$ and $G_{w}$ have finite center, then ${\rm Aut}(G_\Gamma)$ is word hyperbolic if and only if  $G_{v}$ and $G_{w}$ are finite.
\end{enumerate} 
\end{proposition}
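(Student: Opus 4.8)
The plan is to treat the three implications in turn.

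For part~(i): a discrete graph $\Gamma=(V,\emptyset)$ with $\#V\geq 3$ contains a SIL. Indeed, choose three distinct vertices $u,v,w$; then ${\rm lk}(v)\cap{\rm lk}(w)=\emptyset$ (all links are empty), $d(v,w)\geq 2$ because $v$ and $w$ lie in different components, and the component $\{u\}$ of $\langle V-({\rm lk}(v)\cap{\rm lk}(w))\rangle=\Gamma$ contains neither $v$ nor $w$, which is exactly the condition in~\ref{SIL}. By Lemma~\ref{subgroupZZ} the group ${\rm Aut}(G_\Gamma)$ then has a subgroup isomorphic to $\mathbb{Z}\times\mathbb{Z}$, so it is not word hyperbolic by Lemma~\ref{ZZ}.

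For the ``only if'' part of~(ii), suppose a vertex group, say $G_v$, is infinite, and let $C=\langle\{w\}\rangle$ be the unique connected component of $\langle V-{\rm st}(v)\rangle$. I would look at the partial conjugations $\pi_{x,C}$ and the inner automorphisms $\iota_x\colon g\mapsto xgx^{-1}$, for $x\in G_v$, inside ${\rm Aut}(G_v*G_w)$. Using the standard fact that the centraliser of a vertex group in a non-trivial free product equals its centre, together with $Z(G_v*G_w)=\{1\}$, one checks directly that $\pi_{x,C}$ commutes with $\iota_y$ for all $x,y\in G_v$, that $x\mapsto\pi_{x,C}$ and $x\mapsto\iota_x$ are injective homomorphisms $G_v\to{\rm Aut}(G_v*G_w)$, and that $\pi_{x,C}=\iota_x$ precisely when $x\in Z(G_v)$. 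Now I would distinguish two cases. If $G_v$ has an element $x$ of infinite order, then $\langle\pi_{x,C},\iota_x\rangle$ is abelian, and since $Z(G_v)$ is finite a short computation gives $\langle\pi_{x,C},\iota_x\rangle\cong\mathbb{Z}\times\mathbb{Z}$, so Lemma~\ref{ZZ} applies. If every element of $G_v$ has finite order, then $\{\pi_{x,C}\mid x\in G_v\}\cong G_v$ is an infinite torsion subgroup of ${\rm Aut}(G_v*G_w)$, which is impossible in a word hyperbolic group. In either case ${\rm Aut}(G_v*G_w)$ is not word hyperbolic; note that this is where the hypothesis of finite centre is used.

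For the ``if'' part, assume $G_v$ and $G_w$ are finite and put $G:=G_v*G_w$. Then $G$ is finitely generated and acts on its (locally finite) Bass--Serre tree with finite vertex stabilizers, so by \cite[IV 1.9]{Dicks} it is virtually free, hence word hyperbolic by Corollary~\ref{virhyp}; also $Z(G)=\{1\}$. By Corollary~\ref{finiteOut} it is therefore enough to show that ${\rm Out}(G)$ is finite (alternatively one may invoke Proposition~\ref{Pettet} and Corollary~\ref{virhyp}). To see this I would use Kurosh's subgroup theorem: every $\varphi\in{\rm Aut}(G)$ sends $G_v$ and $G_w$ to conjugates of the factors, possibly interchanging them (and the latter only if $G_v\cong G_w$), so a subgroup of index at most $2$ in ${\rm Aut}(G)$ consists of automorphisms $\varphi$ with $\varphi(G_v)=gG_vg^{-1}$ and $\varphi(G_w)=hG_wh^{-1}$. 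After composing with an inner automorphism we may assume $\varphi(G_v)=G_v$; then $\varphi(G_w)=mG_wm^{-1}$ with $\langle G_v,mG_wm^{-1}\rangle=G$, and one shows that this forces $m\in G_v\cdot G_w$, so that a further inner automorphism makes $\varphi$ fix both $G_v$ and $G_w$ setwise. Such a $\varphi$ is determined by the pair $(\varphi|_{G_v},\varphi|_{G_w})\in{\rm Aut}(G_v)\times{\rm Aut}(G_w)$, a finite group, so ${\rm Out}(G)$ is finite.

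I expect the main obstacle to be the last point: that $\langle G_v,mG_wm^{-1}\rangle=G$ forces $m\in G_v\cdot G_w$ (equivalently, that $mG_wm^{-1}$ is a $G_v$-conjugate of $G_w$). The cleanest approach I see uses that $G$ is Hopfian (being residually finite as a virtually free group) and that a free-product decomposition of $G$ into freely indecomposable factors is unique up to conjugacy and order (Grushko--Kurosh): the surjection $G_v*G_w\to G$, $(a,b)\mapsto(a,mbm^{-1})$, is then an isomorphism, so $G_v$ and $mG_wm^{-1}$ are vertex groups of a one-edge splitting of $G$; comparing with the given splitting forces the vertex of the Bass--Serre tree fixed by $mG_wm^{-1}$ to be adjacent to the one fixed by $G_v$, i.e. $m\in G_v\cdot G_w$. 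A more hands-on alternative is a normal-form argument showing that if $m\notin G_v\cdot G_w$ then $\langle G_v,mG_wm^{-1}\rangle$ is a proper subgroup of infinite index in $G$.
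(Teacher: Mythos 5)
Parts (i) and the ``only if'' half of (ii) are essentially the paper's own argument: the discrete graph on $\geq 3$ vertices has a SIL, so Lemma~\ref{subgroupZZ} and Lemma~\ref{ZZ} apply; and for an infinite factor you split into the infinite-torsion case (infinite torsion subgroup inside ${\rm Inn}$, impossible in a hyperbolic group) and the case of an element of infinite order (inner automorphism together with a partial conjugation generating $\mathbb{Z}\times\mathbb{Z}$, using finiteness of $Z(G_v)$), exactly as in the paper. For the ``if'' half of (ii) your route differs: the paper quotes Karrass--Pietrowski--Solitar, which exhibits ${\rm Aut}(G_v*G_w)$ as an amalgam of finite groups, and then the Kharlampovich--Myasnikov closure of hyperbolicity under amalgamation over finite subgroups; you instead reduce, via virtual freeness of $G_v*G_w$ and Corollary~\ref{finiteOut} (or Proposition~\ref{Pettet}), to showing ${\rm Out}(G_v*G_w)$ is finite, which you attack by hand with Kurosh. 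That reduction is perfectly sound, and in fact the finiteness of ${\rm Out}$ is already available inside the paper: the two-vertex edgeless graph has no SIL, so Proposition~\ref{finOut} gives it at once, making your Kurosh computation unnecessary.

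The one genuine soft spot is the step you yourself flag: that $\langle G_v,\,mG_wm^{-1}\rangle=G$ forces $m\in G_vG_w$. The statement is true, but neither of your sketched justifications is solid as written. Uniqueness of the Grushko--Kurosh decomposition up to conjugacy yields no contradiction here, because $G_v*(mG_wm^{-1})$ \emph{is} again a decomposition of the required type, so ``comparing splittings'' does not by itself produce adjacency of the fixed vertices. And the fallback claim that otherwise $\langle G_v,mG_wm^{-1}\rangle$ has \emph{infinite} index is false in general: in $D_\infty=\langle a\rangle*\langle b\rangle$ with $m=ba$ one gets $\langle a,\,babab\rangle$, a subgroup of index $3$. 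What is true, and what you need, is mere properness, and the clean argument is the Bass--Serre one: in the tree for $G_v*G_w$ (trivial edge groups) the vertex $mG_w$ is adjacent to the vertex $G_v$ exactly when $m\in G_vG_w$; if the distance is $\geq 3$, ping-pong along the geodesic shows $\langle G_v,mG_wm^{-1}\rangle$ is the free product of the two stabilizers acting with quotient a longer segment, hence a proper subgroup. With that step repaired (or simply replaced by the citation of Proposition~\ref{finOut}), your proof is correct.
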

\begin{proof}
The first statement of the proposition follows from Lemma \ref{subgroupZZ} and Lemma \ref{ZZ}. 

Let $\Gamma=(\left\{v, w\right\}, \emptyset)$ and let $Z(G_v)$ and $Z(G_w)$ be finite. If $G_v$ and $G_w$ are finite, then it was proven in \cite{Karrass} that ${\rm Aut}(G_v*G_w)$ is an amalgamated product of finite groups. The class of word hyperbolic groups is closed under free products with amalgamation along finite subgroups \cite[Cor. 3]{Kharlampovich}. Hence ${\rm Aut}(G_v*G_w)$ is word hyperbolic.

Suppose that $G_v$ is not finite. If $G_v$ is infinite torsion, then the group ${\rm Aut}(G_v*G_w)$ has an infinite torsion subgroup. More precisely,  $G_v*G_w\cong {\rm Inn}(G_v*G_w)\subseteq{\rm Aut}(G_v*G_w)$. It was proven by Gromov that torsion subgroups of word hyperbolic groups are finite, see \cite[Chap.8 Cor.36]{Ghys}. Thus ${\rm Aut}(G_v*G_w)$ is not word hyperbolic. 

Otherwise there exists an element of infinite order in $G_v$. Let $x\in G_v$ be an element with infinite order and let $f$ denote the inner automorphism $g\mapsto xgx^{-1}$. Notice that $f\neq\pi_{x,\langle\left\{w\right\}\rangle}$ since $Z(G_v)$ is finite. Then $\langle f, \pi_{x,\langle\left\{w\right\}\rangle}\rangle\cong\mathbb{Z}\times\mathbb{Z}$. It follows again from Lemma \ref{ZZ} that ${\rm Aut}(G_\Gamma)$ is not word hyperbolic.
\end{proof}

In this paper our focus is on graph products of finite groups, that is, graph products for which all of the vertex groups $G_v$ are finite. Word hyperbolic graph products of finite groups were characterized by Meier in \cite{Meier}. 
\begin{theorem}(\cite{Meier})
\label{hyperbolicG}
Let $G_\Gamma$ be a graph product of finite groups. For $G_\Gamma$ the following statements are equivalent
\begin{enumerate}
\item[(i)] The group $G_\Gamma$ is word hyperbolic.
\item[(ii)] The group $G_\Gamma$ has no subgroup isomorphic to $\mathbb{Z}\times\mathbb{Z}$.
\item[(iii)] The graph $\Gamma$ does not contain an induced cycle of length four.
\end{enumerate}
\end{theorem}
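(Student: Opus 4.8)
The plan is to establish the cycle of implications $(i)\Rightarrow(ii)\Rightarrow(iii)\Rightarrow(i)$.

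The implication $(i)\Rightarrow(ii)$ is immediate from Lemma~\ref{ZZ}. For $(ii)\Rightarrow(iii)$ I would argue by contraposition. Suppose $\Gamma$ contains an induced cycle $v_1v_2v_3v_4$ of length four, so that among $v_1,v_2,v_3,v_4$ the only edges are $\left\{v_1,v_2\right\}$, $\left\{v_2,v_3\right\}$, $\left\{v_3,v_4\right\}$ and $\left\{v_4,v_1\right\}$. Pick non-trivial elements $g_i\in G_{v_i}$. Inside the parabolic subgroup $G_{\langle\left\{v_1,v_2,v_3,v_4\right\}\rangle}\leq G_\Gamma$ (a parabolic subgroup by \cite{Green}) the subgroup $\langle G_{v_1},G_{v_3}\rangle$ is the free product $G_{v_1}*G_{v_3}$, so $g_1g_3$ has infinite order; likewise $g_2g_4$ has infinite order in $G_{v_2}*G_{v_4}$. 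Since each of $v_1,v_3$ is adjacent to each of $v_2,v_4$, these two free products commute elementwise, and their intersection is trivial because intersections of standard parabolics are standard parabolics on the intersection of vertex sets. Hence $\langle g_1g_3\rangle\times\langle g_2g_4\rangle\cong\mathbb{Z}\times\mathbb{Z}$ embeds into $G_\Gamma$, contradicting $(ii)$.

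The implication $(iii)\Rightarrow(i)$ is the substantial one, and I would treat it geometrically. A graph product of finite groups acts properly and cocompactly by isometries on a CAT(0) space $X_\Gamma$ (the Davis-type complex built from $\Gamma$ and the $G_v$; for right-angled Coxeter groups this is the usual Davis cube complex). By Bridson's flat plane theorem, a group acting properly and cocompactly on a CAT(0) space is word hyperbolic if and only if the space contains no isometrically embedded flat plane, and by the flat torus theorem such a plane would produce a subgroup isomorphic to $\mathbb{Z}\times\mathbb{Z}$ in $G_\Gamma$. It therefore suffices to prove the combinatorial statement: \emph{if $G_\Gamma$ contains a copy of $\mathbb{Z}\times\mathbb{Z}$, then $\Gamma$ contains an induced cycle of length four.} For this I would use the normal form theory of graph products \cite{Green}: after conjugating, replace the two commuting generators by cyclically reduced representatives and attach to each its support — a connected induced subgraph which, since the vertex groups are finite, cannot be contained in a clique. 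The relation that the generators commute, together with the description of centralisers of cyclically reduced elements, forces either that one generator is a power of the other (excluded, as the subgroup is $\mathbb{Z}^2$) or that two vertices of one support and two of the other are pairwise adjacent across the two supports while non-adjacent within — an induced square.

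Alternatively, $(iii)\Rightarrow(i)$ can be run by induction on $\#V$: free products of word hyperbolic groups are word hyperbolic, so one may assume $\Gamma$ connected, and then use the amalgamated decomposition $G_\Gamma=G_{\langle{\rm st}(v)\rangle}*_{G_{\langle{\rm lk}(v)\rangle}}G_{\langle V\setminus\left\{v\right\}\rangle}$, in which both vertex groups are word hyperbolic by induction and the edge group is quasiconvex; when ${\rm lk}(v)$ is a clique the edge group is finite and \cite{Kharlampovich} applies directly, while in general one invokes the Bestvina--Feighn combination theorem, the point being that the absence of an induced $C_4$ is exactly what rules out the thin bi-infinite hallways (equivalently, flats) that would obstruct it. I expect the main obstacle to be precisely this last step in either formulation — converting a $\mathbb{Z}\times\mathbb{Z}$ (or a flat plane in $X_\Gamma$) into an explicit induced four-cycle, or equivalently verifying the hypotheses of the combination theorem — since it requires careful bookkeeping of supports in the graph product and is the unique place where the hypothesis ``no induced cycle of length four'' is used in an essential way; the reductions to connected graphs and the CAT(0)/flat-plane input are comparatively formal.
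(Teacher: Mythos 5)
The paper offers no proof of this statement at all: it is quoted verbatim from Meier \cite{Meier} (with the $\mathbb{Z}\times\mathbb{Z}$ formulation folded in), so any comparison is between your sketch and the cited literature rather than an argument in the text. Your implications (i)$\Rightarrow$(ii) (Lemma \ref{ZZ}) and (ii)$\Rightarrow$(iii) are correct and complete: on an induced four-cycle the parabolic subgroup is $(G_{v_1}*G_{v_3})\times(G_{v_2}*G_{v_4})$, the elements $g_1g_3$ and $g_2g_4$ have infinite order, and a $\mathbb{Z}\times\mathbb{Z}$ embeds; this is exactly the standard argument.

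The genuine gap is in (iii)$\Rightarrow$(i), which is the entire content of Meier's theorem, and neither of your two routes closes it. In the geometric route you invoke ``the flat torus theorem'' to turn an isometrically embedded flat plane in the Davis-type complex into a $\mathbb{Z}\times\mathbb{Z}$ subgroup, but the flat torus theorem goes in the opposite direction (a $\mathbb{Z}^2$ subgroup yields a flat); extracting a periodic flat, hence a $\mathbb{Z}^2$, from a mere flat plane is the periodic flat problem, which is false-in-general-unknown for CAT(0) spaces and for cocompact CAT(0) cube complexes requires heavy machinery (rank rigidity of Caprace--Sageev), which you neither cite nor prove. Without that step, your proposed reduction ``$\mathbb{Z}\times\mathbb{Z}$ in $G_\Gamma$ implies an induced $C_4$'' only yields (iii)$\Rightarrow$(ii), not (iii)$\Rightarrow$(i) -- and even that combinatorial claim is only sketched, since the centraliser analysis of cyclically reduced elements in a graph product is asserted rather than carried out. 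In the alternative inductive route, the application of the Bestvina--Feighn combination theorem to $G_{\langle{\rm st}(v)\rangle}*_{G_{\langle{\rm lk}(v)\rangle}}G_{\langle V\setminus\{v\}\rangle}$ hinges on verifying the annuli-flare (no thin hallways) condition, which you explicitly leave open; as you yourself note, that verification is precisely where the hypothesis ``no induced $C_4$'' must be used, so the argument stops exactly at the decisive point. As it stands, the hard direction is not proved; the honest options are to cite \cite{Meier} for (iii)$\Rightarrow$(i), as the paper does, or to supply the missing centraliser/flaring analysis in full.
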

Virtually free graph products of finite groups were described by Lohrey and Senizergues in \cite[Thm. 1.1]{Lohrey}.

\begin{theorem}(\cite[Thm. 1.1]{Lohrey})
\label{virtuallyfreeG}
Let $G_\Gamma$ be a graph product of finite groups. For $G_\Gamma$ the following statements are equivalent
\begin{enumerate}
\item[(i)] The group $G_\Gamma$ is virtually free.
\item[(ii)] The graph $\Gamma$ is chordal.
\end{enumerate}
\end{theorem}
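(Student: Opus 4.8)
The plan is to prove the two implications separately, reading chordality as the combinatorial shadow of an iterated amalgamation over finite groups. For (ii) $\Rightarrow$ (i) I would induct on the number of vertices of $\Gamma$, using that a chordal graph is assembled from cliques along clique-separators (Dirac's theorem: a non-complete chordal graph has a minimal vertex separator $S$ inducing a complete subgraph). If $\Gamma$ is complete then $G_\Gamma=\prod_{v\in V}G_v$ is finite, hence virtually free; this is the base case. If $\Gamma$ is disconnected, $\Gamma=\Gamma_1\sqcup\Gamma_2$, then $G_\Gamma=G_{\Gamma_1}\ast G_{\Gamma_2}$ and both factors are virtually free by induction, since induced subgraphs of a chordal graph are chordal. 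If $\Gamma$ is connected but not complete, Dirac's theorem yields a complete separating set $S$; writing $V=V_1\cup V_2$ with $V_1\cap V_2=S$ and no edges between $V_1\setminus S$ and $V_2\setminus S$, the defining presentation splits as $G_\Gamma=G_{\langle V_1\rangle}\ast_{G_{\langle S\rangle}}G_{\langle V_2\rangle}$. Here $G_{\langle S\rangle}=\prod_{v\in S}G_v$ is finite because $S$ is a clique, and $G_{\langle V_1\rangle}$, $G_{\langle V_2\rangle}$ are virtually free by induction (each involves strictly fewer vertices). It remains to invoke closure of finitely generated virtually free groups under amalgamated products over finite subgroups, which follows from the characterisation in \cite{Dicks} by grafting the finite-stabiliser tree actions of the two factors onto the Bass--Serre tree of the splitting.

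For (i) $\Rightarrow$ (ii) I would argue by contraposition. If $\Gamma$ is not chordal it contains an induced cycle $C_n$ with $n\geq 4$, and $G_{C_n}$ is a finitely generated parabolic subgroup of $G_\Gamma$; by Lemma \ref{Dicks} it then suffices to show that $G_{C_n}$ is itself not virtually free for every $n\geq 4$. When $n=4$, writing the vertices cyclically as $a,b,c,d$, the only non-edges are $\{a,c\}$ and $\{b,d\}$, so $G_{C_4}=(G_a\ast G_c)\times(G_b\ast G_d)$; each factor is an infinite free product of two non-trivial finite groups and so contains an element of infinite order, whence $G_{C_4}$ contains $\mathbb{Z}\times\mathbb{Z}$. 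By Lemma \ref{ZZ} and Corollary \ref{virhyp} such a group is not virtually free.

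The main obstacle is the case $n\geq 5$, where this shortcut fails: by Meier's Theorem \ref{hyperbolicG} the group $G_{C_n}$ is word hyperbolic (the cycle $C_n$ has no induced $C_4$), so it contains no $\mathbb{Z}\times\mathbb{Z}$ and the obstruction must be genuinely two-dimensional. Here I would show that $G_{C_n}$ is one-ended, which rules out virtual freeness since an infinite virtually free group has two or infinitely many ends. The strategy is to combine Stallings' theorem — a finitely generated group has more than one end exactly when it splits nontrivially over a finite subgroup — with the principle that, for a graph product of finite groups, every splitting over a finite subgroup is detected by a separating clique of $\Gamma$. Since the only cliques of the triangle-free graph $C_n$ are its vertices and edges, and deleting a vertex (respectively the two endpoints of an edge) from $C_n$ leaves a path, $C_n$ admits no separating clique; being moreover connected and infinite, $G_{C_n}$ is one-ended. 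This in fact gives a uniform treatment of all $n\geq 4$, the $\mathbb{Z}\times\mathbb{Z}$ argument being merely a convenient shortcut when $n=4$. An equivalent route is cohomological: passing to a torsion-free finite-index subgroup $H$ (graph products of finite groups being virtually torsion-free) and computing $\mathrm{cd}(H)=2$ from the two-dimensional Davis-type complex on which $G_{C_n}$ acts properly and cocompactly, so that $\mathrm{vcd}(G_{C_n})=2>1$.

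The step I expect to require the most care is the claim that for graph products of finite groups the only finite-subgroup splittings arise from separating cliques; this is the geometric heart of the non-chordal direction, exactly where the combinatorics of $\Gamma$ meets the group theory, and I would isolate it as a separate structural lemma on ends of graph products (or else circumvent it via the cohomological-dimension computation, which instead demands verifying virtual torsion-freeness together with the contractibility and two-dimensionality of the associated complex). Everything else reduces to bookkeeping already packaged in Lemma \ref{Dicks}, Lemma \ref{ZZ}, Corollary \ref{virhyp} and Theorem \ref{hyperbolicG}.
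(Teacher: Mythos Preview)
The paper does not prove this theorem; it is quoted from \cite{Lohrey} as a black-box input and used without argument in the proof of Proposition~B, so there is no proof in the paper to compare your proposal against.

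For what it is worth, your outline is sound. The chordal $\Rightarrow$ virtually free direction via Dirac's clique-separator theorem and iterated amalgamation over finite subgroups is the standard route. For the converse you correctly isolate the induced cycles $C_n$ with $n\geq 5$ as the genuine obstacle (since these give hyperbolic graph products with no $\mathbb{Z}\times\mathbb{Z}$) and propose the right tool, one-endedness via Stallings; you also correctly flag the structural lemma---that finite-subgroup splittings of a graph product of finite groups are visible as separating cliques of $\Gamma$---as the step demanding real work. One small caution on your alternative cohomological route: two-dimensionality of the Davis complex yields only $\mathrm{vcd}\leq 2$; to conclude $\mathrm{vcd}=2$ you still need a lower bound (e.g.\ nonvanishing top cohomology of a torsion-free finite-index subgroup, or directly that the group is not virtually free), so the ends argument is the cleaner of the two and avoids circularity.
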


The last result which we will need to prove Theorem A and Proposition B is the following:
\begin{proposition}(\cite[3.20]{Genevois})
\label{finOut}
Let $G_\Gamma$ be a graph product of finite groups. Then ${\rm Out}(G_\Gamma)$ is finite if and only if $\Gamma$ contains no SIL.  
\end{proposition}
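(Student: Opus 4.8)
The plan is to treat the two implications separately, isolating the partial conjugations $\pi_{x,C}$ as the only possible source of infinite order in ${\rm Out}(G_\Gamma)$. Since every vertex group $G_v$ is finite, each single partial conjugation $\pi_{x,C}$ has finite order in ${\rm Aut}(G_\Gamma)$; infiniteness can therefore only come from products of partial conjugations attached to distinct vertices, and the SIL of Definition \ref{SIL} is exactly the combinatorial device that produces such products.

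For the direction ``$\Gamma$ has a SIL $\Rightarrow {\rm Out}(G_\Gamma)$ is infinite'' I would sharpen the construction in the proof of Lemma \ref{subgroupZZ}. From a SIL with data $v,w,C$ and non-trivial $x\in G_v$, $y\in G_w$, I first replace $C$ by suitable connected components of $\langle V-{\rm st}(v)\rangle$ and of $\langle V-{\rm st}(w)\rangle$ lying inside $C$, so that the relevant partial conjugations are genuinely defined and still move a common non-empty set of vertex groups; writing $\tau$ for their composition, $\tau$ acts on those vertex groups as the map $h\mapsto (xy)h(xy)^{-1}$ and fixes $G_v$ and $G_w$ pointwise. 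The decisive step is to show that no power $\tau^{k}$ with $k\neq 0$ is inner. If $\tau^{k}$ were induced by $g\in G_\Gamma$, then $g$ would centralize $G_v$ and $G_w$, so by the normal-form description of centralizers in graph products \cite{Green} we would get $g\in G_{{\rm st}(v)\cap{\rm st}(w)}=G_{{\rm lk}(v)\cap{\rm lk}(w)}$, using $d(v,w)\geq 2$. Such a $g$ is supported away from $v$ and $w$, whereas $(xy)^{k}$ has infinite order and is supported on $\{v,w\}$; hence $g^{-1}(xy)^{k}$ cannot centralize the non-empty family of $C$-vertex groups for $k\neq 0$, a contradiction. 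Thus the image of $\tau$ in ${\rm Out}(G_\Gamma)$ has infinite order.

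For the converse ``no SIL $\Rightarrow {\rm Out}(G_\Gamma)$ is finite'' I would use the dichotomy \cite[3.3]{Charney}: a SIL-free graph is either the disjoint union of two complete graphs or connected. In the first case each factor is a direct product of finite groups, hence finite, so $G_\Gamma$ is a free product of two finite groups; modulo inner automorphisms every partial conjugation collapses to a factor automorphism, so ${\rm Out}(G_\Gamma)$ is covered by the finite group of factor automorphisms together with a possible swap of the two factors, which is the outer-automorphism input behind Proposition \ref{freeProducts}(ii) via \cite{Karrass}. In the connected case I would invoke a generating set of ${\rm Aut}(G_\Gamma)$ by inner automorphisms, partial conjugations, and a finite ``local'' family of vertex-group automorphisms and graph symmetries; the local family and the symmetries contribute only finitely to ${\rm Out}(G_\Gamma)$, and the point of the no-SIL hypothesis is that the images of the partial conjugations generate a finite subgroup of ${\rm Out}(G_\Gamma)$, whence ${\rm Out}(G_\Gamma)$ is finite.

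The main obstacle is precisely this last assertion: that in a connected SIL-free graph the subgroup of ${\rm Out}(G_\Gamma)$ generated by partial conjugations is finite. Each partial conjugation already has finite order, but a group generated by finite-order elements need not be finite, so one must genuinely exhibit the relations forcing finiteness. The delicate point is that honest partial conjugations are governed by the components of $\langle V-{\rm st}(v)\rangle$, whereas SILs are phrased via the intersections of links ${\rm lk}(v)\cap{\rm lk}(w)$; reconciling these two combinatorial descriptions, and showing that the absence of a SIL leaves no room for an infinite-order product of partial conjugations modulo ${\rm Inn}(G_\Gamma)$, is where the real work of the proof lies.
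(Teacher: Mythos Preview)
The paper does not give its own proof of this proposition: it is simply quoted from \cite[3.20]{Genevois} and used as a black box in the proofs of Theorem~A and Proposition~B. So there is no in-paper argument to compare your proposal against.

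On the substance of your outline: your treatment of the implication ``$\Gamma$ has a SIL $\Rightarrow {\rm Out}(G_\Gamma)$ infinite'' is a reasonable sharpening of the paper's Lemma~\ref{subgroupZZ}, and indeed that lemma already yields the conclusion once you note that one of the two generators of the $\mathbb{Z}\times\mathbb{Z}$ is inner while the other is not. For the converse, you correctly isolate the real difficulty---showing that in a connected SIL-free graph the partial conjugations generate a finite subgroup of ${\rm Out}(G_\Gamma)$---but you do not resolve it, and the sentence ``I would invoke a generating set of ${\rm Aut}(G_\Gamma)$ by inner automorphisms, partial conjugations, and a finite `local' family of vertex-group automorphisms and graph symmetries'' already hides a nontrivial theorem: for graph products of arbitrary finite (not just cyclic or abelian) vertex groups, the existence of such a generating set is itself one of the main results of \cite{Genevois}, proved by geometric methods (actions on quasi-median graphs) rather than by the combinatorial route you sketch. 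So as written your proposal is a plausible strategy with an acknowledged gap, and filling that gap essentially requires the machinery of the cited reference rather than a self-contained argument.
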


\section{Proof of Theorem A}
Now we have all the ingredients to prove Theorem A. 
\begin{proof}
Let $G_\Gamma$ be a graph product of finite groups. 
It is straightforward to verify that the center of $G_\Gamma$ is contained in a parabolic subgroup $G_{\langle V'\rangle}$ where 
\[
V'=\left\{ v\in V\mid d(v, w)\leq 1\text{ for each }w\in V\right\}.
\]
The induced subgraph $\langle V'\rangle$ is complete. Thus $G_{\langle V'\rangle}$ is finite and therefore $Z(G_\Gamma)$ is also finite.

Assume that ${\rm Aut}(G_\Gamma)$ is word hyperbolic. Then by Lemma \ref{ZZ} ${\rm Aut}(G_\Gamma)$ has no subgroup isomorphic to $\mathbb{Z}\times\mathbb{Z}$. 

Suppose that ${\rm Aut}(G_\Gamma)$ has no subgroup isomorphic to $\mathbb{Z}\times\mathbb{Z}$. We have ${\rm Inn}(G_\Gamma)\cong G_\Gamma/Z(G_\Gamma)$. Since $Z(G_\Gamma)$ is finite, $G_\Gamma$ has also no subgroup isomorphic to $\mathbb{Z}\times\mathbb{Z}$. By Theorem \ref{hyperbolicG} follows that $\Gamma$ has no induced cycle of length $4$. Let us assume that $\Gamma$ has a SIL, then 
by Lemma \ref{subgroupZZ} the group ${\rm Aut}(G_\Gamma)$ has a subgroup isomorphic to $\mathbb{Z}\times\mathbb{Z}$. Hence ${\rm Aut}(G_\Gamma)$ is not hyperbolic. This contradicts (ii). Therefore $\Gamma$ contains no SIL. 

Assume now that $\Gamma$ has no induced cycle of length four and $\Gamma$ does not contain a SIL. By Proposition \ref{finOut} the outer automorphism group of $G_\Gamma$, ${\rm Out}(G_\Gamma)$ is finite. Further, the center of $G_\Gamma$ is finite and by Theorem \ref{hyperbolicG} the group $G_\Gamma$ is word hyperbolic. The word hyperbolicity of ${\rm Aut}(G_\Gamma)$ follows  from Corollary \ref{finiteOut}.
\end{proof}

\section{Coxeter groups} 
An another important class of groups is a class consisting of Coxeter groups. 
\begin{definition}
Let $\Gamma=(V, E)$ be a finite simplicial graph with an edge-labeling $\varphi:E\rightarrow \mathbb{N}_{\geq 2}$. The Coxeter group $W_\Gamma$ is defined as follows:
\[
W_\Gamma=\langle V\mid v^2, (vw)^{\varphi(\left\{v, w\right\})}, v\in V, \left\{v, w\right\}\in E\rangle.
\]
\end{definition}
For example, the symmetric group ${\rm Sym}(n)$ and the infinite dihedral group $D_\infty$ are Coxeter groups. We collect some of the facts about Coxeter groups that we need. For more information about these groups we refer to \cite{Davis} and \cite{Humphreys}.
\begin{lemma}(\cite[4.1.6]{Davis})
Let $W_\Gamma$ be a Coxeter group and $\Gamma'\subseteq\Gamma$ be a  subgraph. If $\Gamma'$ is an induced subgraph, then $W_{\Gamma'}$ is a subgroup of $W_\Gamma$ and is called a parabolic subgroup.
\end{lemma}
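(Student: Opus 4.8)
The plan is to deduce this from Tits's theorem that the canonical linear representation of a Coxeter group is faithful, together with the observation that this representation behaves well under restriction to an induced subgraph. Recall the setup: writing $m_{vw}=\varphi(\{v,w\})$ when $\{v,w\}\in E$, $m_{vv}=1$, and $m_{vw}=\infty$ when $v\ne w$ and $\{v,w\}\notin E$, one forms the real vector space $U_\Gamma=\bigoplus_{v\in V}\mathbb{R}e_v$ with symmetric bilinear form $B_\Gamma(e_v,e_w)=-\cos(\pi/m_{vw})$ (read as $1$ when $v=w$ and as $-1$ when $m_{vw}=\infty$), and lets each generator $v$ act on $U_\Gamma$ by the reflection $x\mapsto x-2B_\Gamma(e_v,x)e_v$. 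One checks that the defining relators of $W_\Gamma$ are killed, so this defines a homomorphism $\rho_\Gamma\colon W_\Gamma\to \mathrm{GL}(U_\Gamma)$, and Tits's theorem (see \cite[Chapter~5]{Humphreys} or \cite{Davis}) says $\rho_\Gamma$ is injective.

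Now let $V'$ be the vertex set of the induced subgraph $\Gamma'$ and $U'=\bigoplus_{v\in V'}\mathbb{R}e_v\subseteq U_\Gamma$. First I would record that, because $\Gamma'$ is \emph{induced}, for any two vertices $v,w\in V'$ the quantity $m_{vw}$ computed in $\Gamma$ coincides with the one computed in $\Gamma'$; in particular the ``$\infty$'' entries agree, since a pair of vertices of $V'$ is non-adjacent in $\Gamma'$ exactly when it is non-adjacent in $\Gamma$. Consequently $B_\Gamma$ restricts on $U'$ to precisely the canonical form $B_{\Gamma'}$, the subspace $U'$ is invariant under $\rho_\Gamma(v)$ for $v\in V'$, and $\rho_\Gamma(v)|_{U'}=\rho_{\Gamma'}(v)$. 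By the same observation, the generators $v\in V'$ of $W_\Gamma$ satisfy the defining relations of $W_{\Gamma'}$, so the assignment $v\mapsto v$ extends to a homomorphism $\iota\colon W_{\Gamma'}\to W_\Gamma$.

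It then remains to see that $\iota$ is injective, and for this I would simply compose: restricting $\rho_\Gamma\circ\iota$ to $U'$ gives a homomorphism $W_{\Gamma'}\to\mathrm{GL}(U')$ which on each generator $v\in V'$ acts as $\rho_{\Gamma'}(v)$, hence equals $\rho_{\Gamma'}$. Since $\rho_{\Gamma'}$ is faithful by Tits's theorem applied to the Coxeter group $W_{\Gamma'}$, the map $\iota$ must be injective, and so $\iota$ identifies $W_{\Gamma'}$ with the subgroup of $W_\Gamma$ generated by $V'$, as claimed.

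The only genuinely new step is the bookkeeping in the second paragraph: the hypothesis ``induced'' is used precisely in the claim $B_\Gamma|_{U'\times U'}=B_{\Gamma'}$, and the subtlety to watch is the treatment of non-adjacent vertices, which correspond to $m=\infty$ and $B=-1$; if $\Gamma'$ omitted an edge of $\Gamma$ between two of its vertices, such a pair could carry $m=\infty$ in $\Gamma'$ but a finite label in $\Gamma$, and the compatibility would fail. The heavy input — faithfulness of the canonical representation — I would not reprove, since the excerpt already relies on \cite{Davis} and \cite{Humphreys} for the basic theory; a purely combinatorial alternative exists (one shows a word in the letters $V'$ that is reduced in $W_{\Gamma'}$ is also reduced in $W_\Gamma$, using the Exchange/Deletion Condition in $W_\Gamma$), but it leans on the same body of standard results, so the representation-theoretic route is the cleanest to present.
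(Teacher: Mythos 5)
Your argument is correct, and it is worth noting that the paper itself offers no proof at all here: the lemma is quoted verbatim from Davis \cite[4.1.6]{Davis}, so there is nothing internal to compare against. Your route — set up the canonical bilinear form $B_\Gamma$ and geometric representation $\rho_\Gamma$, observe that for an \emph{induced} subgraph the subspace $U'$ spanned by $\{e_v : v\in V'\}$ is $\rho_\Gamma(v)$-invariant for $v\in V'$ with $B_\Gamma|_{U'\times U'}=B_{\Gamma'}$ and $\rho_\Gamma(v)|_{U'}=\rho_{\Gamma'}(v)$, and then kill the kernel of the natural map $\iota\colon W_{\Gamma'}\to W_\Gamma$ by composing with the restriction to $U'$ and invoking faithfulness of $\rho_{\Gamma'}$ — is a complete and standard proof, and you correctly isolate where inducedness is used (so that non-adjacency, i.e.\ $m=\infty$, is read the same way in $\Gamma'$ as in $\Gamma$; this matters because the paper's convention is that a \emph{missing} edge imposes no relation). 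Two small points of care, neither a gap: the existence of $\iota$ only needs $\Gamma'$ to be a labeled subgraph, with inducedness entering solely in the injectivity step, which you in effect acknowledge; and the map $w\mapsto\rho_\Gamma(\iota(w))|_{U'}$ is well defined as a homomorphism to $\mathrm{GL}(U')$ precisely because $U'$ is invariant under the images of all generators $v\in V'$, which you have checked. Davis's own treatment proceeds instead through the word-combinatorial theory (reduced words, Exchange/Deletion), the alternative you mention in your closing remark; your representation-theoretic version buys a shorter self-contained argument at the cost of assuming Tits's faithfulness theorem, which is a perfectly reasonable trade given that the paper already leans on \cite{Davis} and \cite{Humphreys} for foundational facts.
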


\begin{lemma}(\cite[1.1]{Hosaka})
\label{center}
Let $W_\Gamma$ be a Coxeter group. Then $Z(W_\Gamma)$ is finite.
\end{lemma}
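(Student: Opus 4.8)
\textit{Proof idea.}~The plan is to reduce to the case of a ``non-reducible'' graph and then to read the statement off from the geometry of the canonical reflection representation. For $v,w\in V$ write $m_{vw}=\varphi(\{v,w\})$ if $\{v,w\}\in E$ and $m_{vw}=\infty$ otherwise, and call $\Gamma$ \emph{reducible} if $V=V_1\sqcup V_2$ with $V_1,V_2\neq\emptyset$ and $m_{vw}=2$ for all $v\in V_1$ and $w\in V_2$. In that case the defining presentation immediately gives $W_\Gamma\cong W_{\langle V_1\rangle}\times W_{\langle V_2\rangle}$, hence $Z(W_\Gamma)=Z(W_{\langle V_1\rangle})\times Z(W_{\langle V_2\rangle})$; iterating this, it suffices to prove the lemma when $\Gamma$ is not reducible. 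If moreover $W_\Gamma$ is finite there is nothing to prove, so the whole content lies in the case where $\Gamma$ is not reducible and $W_\Gamma$ is infinite, and there I would show $Z(W_\Gamma)=1$.

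So assume $\Gamma$ is not reducible and $W_\Gamma$ is infinite, and fix $z\in Z(W_\Gamma)$. I would work with the canonical representation $\rho\colon W_\Gamma\to\mathrm{GL}(\mathbb{R}^{V})$, where $\mathbb{R}^{V}$ has basis $(\alpha_v)_{v\in V}$ and carries the $W_\Gamma$-invariant symmetric bilinear form $B$ determined by $B(\alpha_v,\alpha_v)=1$ and $B(\alpha_v,\alpha_w)=-\cos(\pi/m_{vw})$ (read as $-1$ when $m_{vw}=\infty$), each generator $v$ acting by the $B$-reflection $x\mapsto x-2B(\alpha_v,x)\alpha_v$. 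The two classical inputs I would invoke are Tits' theorem that $\rho$ is faithful, and the Tits--Vinberg criterion that the Tits cone $U=\bigcup_{g\in W_\Gamma}g\cdot\overline{C}$, with $\overline{C}=\{f\in(\mathbb{R}^{V})^{*}\colon\langle f,\alpha_v\rangle\geq 0\text{ for all }v\in V\}$, equals the whole of $(\mathbb{R}^{V})^{*}$ if and only if $W_\Gamma$ is finite.

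Since $z$ commutes with each generator $v$, the operator $\rho(z)$ preserves the $(-1)$-eigenline $\mathbb{R}\alpha_v$ of the reflection $\rho(v)$, so $\rho(z)\alpha_v=c_v\alpha_v$ for some scalar $c_v$. Applying the same observation to the involution $vwv$, whose reflection has root $\rho(v)\alpha_w=\alpha_w-2B(\alpha_v,\alpha_w)\alpha_v$ --- a vector whose $\alpha_v$-coefficient is nonzero precisely when $m_{vw}\neq 2$ --- and using that $\alpha_v,\alpha_w$ are linearly independent, one gets $c_v=c_w$ whenever $m_{vw}\neq 2$. As $\Gamma$ is not reducible, all the $c_v$ are equal to a single scalar $c$, so $\rho(z)=c\cdot\mathrm{id}_{\mathbb{R}^{V}}$; since $\rho(z)\in\rho(W_\Gamma)$ preserves the nonzero form $B$, this forces $c^{2}=1$. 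If $c=1$ then $\rho(z)=\mathrm{id}$ and faithfulness of $\rho$ yields $z=1$. If $c=-1$ then $z$ sends $\overline{C}$ to $-\overline{C}$ in $(\mathbb{R}^{V})^{*}$, so the convex cone $U$ contains both $\overline{C}$ and $-\overline{C}$; since $\overline{C}$ is full-dimensional, $U\supseteq\overline{C}-\overline{C}=(\mathbb{R}^{V})^{*}$, which by the criterion forces $W_\Gamma$ to be finite, a contradiction. Hence $Z(W_\Gamma)=1$.

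I expect the only real difficulty to be the infinite non-reducible case, where the work is done by the two cited facts about the canonical representation; the step showing that $\rho(z)$ is a scalar is elementary but needs the bookkeeping with roots above, and one should note that a central element need not a priori have finite order, which is why the relation $c^{2}=1$ is obtained from invariance of $B$ rather than from an order argument. A representation-free alternative would go through the Davis complex --- a central element acts there with bounded displacement, and one would then have to exclude Clifford-translation behaviour for infinite non-reducible $W_\Gamma$ --- but the reflection-representation argument above seems cleaner given the classical inputs.
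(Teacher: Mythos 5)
Your argument is correct. Note first that the paper itself offers no proof of this lemma: it is quoted from Hosaka's paper on the center of a Coxeter group (which in fact identifies $Z(W_\Gamma)$ precisely, essentially as the center of the product of the finite irreducible factors), so any complete argument you give is necessarily a different route from the paper's bare citation. Your route is the classical one: split off irreducible components (your ``non-reducible'' pieces), observe the finite ones contribute finitely, and prove triviality of the center for an infinite irreducible factor via the Tits representation. The details check out: the $(-1)$-eigenspace of a $B$-reflection is exactly the root line even when $B$ is degenerate, so $\rho(z)\alpha_v=c_v\alpha_v$ is justified without any nondegeneracy or Schur-type hypothesis; the comparison with the root $\rho(v)\alpha_w$ correctly yields $c_v=c_w$ exactly when $B(\alpha_v,\alpha_w)\neq 0$, i.e.\ $m_{vw}\neq 2$ (including $m_{vw}=\infty$), and irreducibility then propagates a single scalar $c$; invariance of $B$ gives $c^2=1$ without assuming $z$ has finite order, as you rightly flag; and in the case $c=-1$ the Tits cone, being a convex cone containing the full-dimensional chamber $\overline{C}$ and its negative, is closed under sums and hence is all of $(\mathbb{R}^V)^*$, which by the finiteness criterion contradicts infiniteness. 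What the citation buys the paper is a sharper statement (an exact description of the center) with no machinery on display; what your proof buys is self-containedness at the cost of invoking two classical inputs, faithfulness of the canonical representation and the Tits cone finiteness criterion, both standard (Bourbaki/Vinberg). The only cosmetic improvement would be to state explicitly that non-reducibility is used exactly as connectivity of the graph on $V$ with edges $\{v,w\}$ for $m_{vw}\neq 2$, which is what lets the equalities $c_v=c_w$ chain up to a single scalar.
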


Word hyperbolic Coxeter groups were characterized by Moussong in his thesis \cite[17.1]{Moussong}. He proved
\begin{theorem}
\label{Moussong}
Let $W_\Gamma$ be a Coxeter group. Then the following are equivalent:
\begin{enumerate}
\item[(i)] The group $W_\Gamma$ is word hyperbolic.
\item[(ii)] The group $W_\Gamma$ has no subgroup isomorphic to $\mathbb{Z}\times\mathbb{Z}$.
\item[(iii)] There is no parabolic subgroup $W_{\Gamma'}$ such that $W_{\Gamma'}$ is an Euclidean Coxeter group of rank $\geq 3$ and there is no pair of disjoint induced subgraphs $\Gamma_1$ and $\Gamma_2$ such that the parabolic subgroups $W_{\Gamma_1}$ and $W_{\Gamma_2}$ commute and are infinite.
\end{enumerate}
\end{theorem}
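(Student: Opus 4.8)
The plan is to establish the cycle of implications $(i)\Rightarrow(ii)\Rightarrow(iii)\Rightarrow(i)$. The implication $(i)\Rightarrow(ii)$ is immediate from Lemma \ref{ZZ}, since a word hyperbolic group contains no subgroup isomorphic to $\mathbb{Z}\times\mathbb{Z}$.

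For $(ii)\Rightarrow(iii)$ I would argue contrapositively, producing a copy of $\mathbb{Z}\times\mathbb{Z}$ from either forbidden configuration. If a parabolic subgroup $W_{\Gamma'}$ is Euclidean of rank $\geq 3$, then it is a cocompact lattice in the isometry group of some $\mathbb{E}^n$ with $n\geq 2$, so by the crystallographic (Bieberbach) structure its translation subgroup is free abelian of rank $n\geq 2$ and hence contains $\mathbb{Z}\times\mathbb{Z}$; as $W_{\Gamma'}$ embeds in $W_\Gamma$, this gives the required subgroup. If instead there are disjoint induced subgraphs $\Gamma_1,\Gamma_2$ with $W_{\Gamma_1},W_{\Gamma_2}$ infinite and commuting elementwise, then each infinite Coxeter group contains an element of infinite order; choosing such elements $a\in W_{\Gamma_1}$ and $b\in W_{\Gamma_2}$ and using the intersection formula $W_{\Gamma_1}\cap W_{\Gamma_2}=W_{\Gamma_1\cap\Gamma_2}=\{1\}$ for parabolic subgroups on disjoint vertex sets, one gets $\langle a,b\rangle\cong\mathbb{Z}\times\mathbb{Z}$. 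In either case $(ii)$ fails.

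The substance of the theorem is $(iii)\Rightarrow(i)$, and here I would pass to the geometry of the Davis complex $\Sigma_\Gamma$. First I would equip $\Sigma_\Gamma$ with the piecewise Euclidean Moussong metric, under which $W_\Gamma$ acts properly and cocompactly by isometries; the \v{S}varc--Milnor lemma then identifies word hyperbolicity of $W_\Gamma$ with Gromov hyperbolicity of $\Sigma_\Gamma$. Next I would invoke Moussong's lemma, which holds for every Coxeter group: the link of each vertex of $\Sigma_\Gamma$ is a metric flag complex assembled from the finite (spherical) parabolic subgroups, and a verification that these links are $\mathrm{CAT}(1)$ (equivalently, have systole at least $2\pi$) shows that $\Sigma_\Gamma$ is $\mathrm{CAT}(0)$ unconditionally. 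Finally, because the action is cocompact, the Flat Plane Theorem applies: $\Sigma_\Gamma$ is Gromov hyperbolic if and only if it contains no isometrically embedded flat plane $\mathbb{E}^2$. Thus it suffices to prove that condition $(iii)$ rules out every flat plane.

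The step I expect to be the main obstacle is precisely this classification of flats: showing that an isometrically embedded $\mathbb{E}^2$ in $\Sigma_\Gamma$ forces one of the two configurations in $(iii)$. The plan here is to track how a flat meets the cell structure and the wall arrangement of $\Sigma_\Gamma$. Analyzing the link data encountered along the flat, one aims to show that a flat either lies inside the Davis complex of an affine (Euclidean) reflection subgroup, necessarily of rank $\geq 3$, or else splits as a product of two lines housed in two commuting infinite parabolic factors; the delicate point is controlling the combinatorial geometry of the vertex links sharply enough to exclude any other source of flatness. Combined with the flats produced in $(ii)\Rightarrow(iii)$, this establishes the equivalence.
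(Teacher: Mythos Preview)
The paper does not prove this theorem at all: it is quoted as Moussong's result \cite[17.1]{Moussong} and used as a black box. So there is no ``paper's own proof'' to compare against; your proposal is an attempt to reprove a cited external theorem.

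That said, your outline is the standard one, and the cycle $(i)\Rightarrow(ii)\Rightarrow(iii)$ is fine. For $(iii)\Rightarrow(i)$ you have correctly identified the architecture (Moussong metric on $\Sigma_\Gamma$, the $\mathrm{CAT}(0)$ link condition, \v{S}varc--Milnor, the Flat Plane Theorem), and you also correctly flag the genuine difficulty: the classification of flats in $\Sigma_\Gamma$. Be aware, however, that this last step is not a routine ``tracking how a flat meets the cell structure''; it is the heart of Moussong's thesis and requires a careful analysis of large links (spherical subcomplexes of girth exactly $2\pi$) and how they force either an affine sub-Coxeter system or a reducible one with two infinite factors. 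As written, your final paragraph is a plan rather than a proof, and carrying it out would essentially reproduce the nontrivial combinatorial geometry in \cite{Moussong}. For the purposes of this paper the correct move is simply to cite Moussong, as the author does.
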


Coxeter groups are fundamental, ’well understood’ objects of group theory, but there are many open questions concerning their  automorphism groups. In general, it is not known if the automorphism group of an arbitrary Coxeter group is finitely
presented.  We are interested in the word hyperbolicity of the automorphism groups of Coxeter groups.

First of all, we want to show that the hyperbolicity of the Coxeter group is a necessary condition for the hyperbolicity of the automorsphism group.
\begin{proposition}
\label{hyp}
Let $W_\Gamma$ be a Coxeter group. If $W_\Gamma$ is not word hyperbolic, then ${\rm Aut}(W_\Gamma)$ is not word hyperbolic.
\end{proposition}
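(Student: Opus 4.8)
The plan is to produce a copy of $\mathbb{Z}\times\mathbb{Z}$ inside ${\rm Aut}(W_\Gamma)$ whenever $W_\Gamma$ is not word hyperbolic, and then invoke Lemma \ref{ZZ}. By Theorem \ref{Moussong}, non-hyperbolicity of $W_\Gamma$ gives a subgroup $H\cong\mathbb{Z}\times\mathbb{Z}$ inside $W_\Gamma$ itself. The point of departure is that $W_\Gamma$ sits inside ${\rm Aut}(W_\Gamma)$ as ${\rm Inn}(W_\Gamma)\cong W_\Gamma/Z(W_\Gamma)$, and by Lemma \ref{center} the center $Z(W_\Gamma)$ is finite. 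So the image $\overline{H}$ of $H$ in ${\rm Inn}(W_\Gamma)$ is $H$ modulo a finite group; since $H\cong\mathbb{Z}^2$ is torsion-free, $H\cap Z(W_\Gamma)=\{1\}$ and hence $\overline{H}\cong\mathbb{Z}\times\mathbb{Z}$ already lies in ${\rm Inn}(W_\Gamma)\subseteq{\rm Aut}(W_\Gamma)$.

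First I would record the easy reduction: it suffices to find a subgroup of ${\rm Aut}(W_\Gamma)$ isomorphic to $\mathbb{Z}\times\mathbb{Z}$, by Lemma \ref{ZZ}. Next I would note that $W_\Gamma$ is infinite (non-hyperbolic Coxeter groups are certainly infinite) and apply Theorem \ref{Moussong} (i)$\Leftrightarrow$(ii) to obtain $H\leq W_\Gamma$ with $H\cong\mathbb{Z}\times\mathbb{Z}$. Then I would consider the conjugation homomorphism $W_\Gamma\to{\rm Aut}(W_\Gamma)$ with image ${\rm Inn}(W_\Gamma)$ and kernel $Z(W_\Gamma)$, restrict it to $H$, and argue that the restriction is injective because $\ker$ is finite (Lemma \ref{center}) while $H$ has no non-trivial finite subgroup. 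Therefore ${\rm Inn}(W_\Gamma)$, and a fortiori ${\rm Aut}(W_\Gamma)$, contains a copy of $\mathbb{Z}\times\mathbb{Z}$, so ${\rm Aut}(W_\Gamma)$ is not word hyperbolic.

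Honestly there is no serious obstacle here: the only thing to be careful about is not accidentally claiming $W_\Gamma\cong{\rm Inn}(W_\Gamma)$ on the nose (it is only $W_\Gamma/Z(W_\Gamma)$), which is exactly why one passes through a torsion-free subgroup where the finite center cannot interfere. One could phrase the whole argument more slickly by observing that $W_\Gamma$ and ${\rm Inn}(W_\Gamma)$ differ by the finite group $Z(W_\Gamma)$, hence are quasi-isometric, so if ${\rm Aut}(W_\Gamma)$ (hence its finite-center quotient-free subgroup ${\rm Inn}(W_\Gamma)$) were hyperbolic then $W_\Gamma$ would be too — but since ${\rm Inn}(W_\Gamma)$ need not have finite index in ${\rm Aut}(W_\Gamma)$, the cleanest route really is the explicit $\mathbb{Z}\times\mathbb{Z}$ sketched above rather than a quasi-isometry argument. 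I expect the write-up to be three or four sentences.
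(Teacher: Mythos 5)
Your proposal is correct and follows essentially the same route as the paper's proof: obtain $H\cong\mathbb{Z}\times\mathbb{Z}$ in $W_\Gamma$ from Theorem \ref{Moussong}, note that the finite center (Lemma \ref{center}) intersects the torsion-free $H$ trivially so that $H$ injects into ${\rm Inn}(W_\Gamma)\subseteq{\rm Aut}(W_\Gamma)$, and conclude via Lemma \ref{ZZ}. Your explicit justification of the injectivity of the restricted projection is exactly the (briefly stated) step in the paper's argument.
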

\begin{proof}
Let $W_\Gamma$ be a not word hyperbolic Coxeter group. By Theorem \ref{Moussong} there exists a subgroup $H\subseteq W_\Gamma$ such that $H\cong\mathbb{Z}\times\mathbb{Z}$. Let us consider the canonical projection $\pi:W\rightarrow W_\Gamma/Z(W_\Gamma)$. By Lemma \ref{center} the center  $Z(W_\Gamma)$ is finite, therefore the restriction $\pi_{|H}$ is injective and thus  $\mathbb{Z}\times\mathbb{Z}\cong \pi(H)\subseteq W_\Gamma/Z(W_\Gamma)\cong{\rm Inn}(W_\Gamma)\subseteq{\rm Aut}(W_\Gamma)$. It follows from Lemma \ref{ZZ}  that ${\rm Aut}(W_\Gamma)$ is not word hyperbolic.
\end{proof}

Virtually free Coxeter groups were characterized in terms of graphs by Mihalik and Tschantz in \cite[Thm. 34]{Mihalik}. 

\begin{theorem}
Let $W_\Gamma$ be a Coxeter group. Then the following are equivalent:
\begin{enumerate}
\item[(i)] The group $W_\Gamma$ is virtually free.
\item[(ii)] The graph $\Gamma$ is chordal and for every complete subgraph $\Gamma'\subseteq\Gamma$ the parabolic subgroup $W_{\Gamma'}$ is finite.
\end{enumerate}
\end{theorem}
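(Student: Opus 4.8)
The plan is to pass between virtual freeness of $W_\Gamma$ and the existence of a decomposition of $W_\Gamma$ as a graph of \emph{finite} groups that is \emph{visual}, i.e. whose vertex and edge groups are parabolic subgroups $W_{\Gamma'}$. The group-theoretic input is the Dicks--Dunwoody criterion recalled in the introduction: a finitely generated group is virtually free exactly when it acts on a tree with finite vertex stabilizers. In particular, if $A,B$ are finitely generated virtually free and $C\le A,B$ is finite, then $A\ast B$ and $A\ast_C B$ are virtually free: let $A,B$ act on trees $T_A,T_B$ with finite stabilizers and graft copies of $T_A,T_B$ along the edge groups, which are finite and hence fix points in $T_A,T_B$; the resulting tree carries an action of the amalgam with finite stabilizers.

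For $(ii)\Rightarrow(i)$ I would argue by induction on $\#V$. If $\Gamma$ is complete, then $W_\Gamma=W_{\langle V\rangle}$ is finite by hypothesis, hence virtually free. If $\Gamma$ is disconnected, write $\Gamma=\Gamma_1\sqcup\Gamma_2$; then $W_\Gamma\cong W_{\Gamma_1}\ast W_{\Gamma_2}$, and each $\Gamma_i$ again satisfies the hypothesis of $(ii)$ on fewer vertices (induced subgraphs of a chordal graph are chordal, and complete subgraphs of $\Gamma_i$ are complete subgraphs of $\Gamma$), so induction together with the first paragraph applies. If $\Gamma$ is connected but not complete, it has a pair of non-adjacent vertices, hence a minimal vertex separator $S$, and by Dirac's theorem on chordal graphs $\langle S\rangle$ is complete. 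Writing $V\setminus S=A\sqcup B$ with no edge between $A$ and $B$ and setting $\Gamma_1=\langle A\cup S\rangle$, $\Gamma_2=\langle B\cup S\rangle$, the standard visual decomposition of Coxeter groups gives $W_\Gamma\cong W_{\Gamma_1}\ast_{W_{\langle S\rangle}}W_{\Gamma_2}$. Here $W_{\langle S\rangle}$ is finite since $\langle S\rangle$ is complete, and $W_{\Gamma_1},W_{\Gamma_2}$ are virtually free by induction, so $W_\Gamma$ is virtually free.

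For $(i)\Rightarrow(ii)$, assume $W_\Gamma$ is virtually free. Every parabolic $W_{\Gamma'}$ is finitely generated (by $V'$), hence virtually free by Lemma~\ref{Dicks}; in particular it cannot be an infinite one-ended group. Now use the known classification of the number of ends of a Coxeter group: a multi-ended Coxeter group is either two-ended, in which case its defining graph is not complete, or it splits visually over a spherical (finite) parabolic subgroup. If $\Gamma'$ is complete it has no proper vertex separator, so it admits no such visual splitting; combined with the previous sentence an infinite $W_{\Gamma'}$ would be one-ended, which is impossible. Hence $W_{\Gamma'}$ is finite for every complete $\Gamma'$. If $\Gamma$ were not chordal it would contain an induced cycle $C_n$ with $n\geq 4$; then $W_{C_n}$ is virtually free, and since $C_n$ is not complete $W_{C_n}$ is infinite, so it would split visually over a spherical parabolic $W_S$ with $S$ a vertex separator of $C_n$. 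But every vertex separator of a cycle of length $\geq4$ contains two non-adjacent vertices, which generate an infinite dihedral subgroup, so $W_S$ is infinite --- a contradiction. Hence $\Gamma$ is chordal.

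The step I expect to be the main obstacle is the converse direction $(i)\Rightarrow(ii)$, and inside it the fact that a Coxeter group with more than one end splits \emph{visually}, i.e. as an amalgam or HNN extension of parabolic subgroups over a finite parabolic. This visual-decomposition theory --- essentially the heart of Mihalik--Tschantz's argument --- is what converts the abstract property ``virtually free'' into combinatorial conditions on $\Gamma$. By contrast, word hyperbolicity of $W_\Gamma$, which does follow from virtual freeness by Corollary~\ref{virhyp}, together with Moussong's criterion (Theorem~\ref{Moussong}) is not enough to force chordality: the right-angled Coxeter group on a pentagon is word hyperbolic but, being virtually a closed hyperbolic surface group, is one-ended and hence not virtually free.
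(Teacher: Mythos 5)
The paper offers no proof of this statement at all: it is imported verbatim from Mihalik--Tschantz \cite[Thm.~34]{Mihalik}, so there is nothing internal to compare your argument with. What you have written is essentially a reconstruction of how that theorem is proved, and it is sound in outline; but be aware that its decisive ingredient in the direction (i)$\Rightarrow$(ii) --- that a Coxeter group which splits over a finite subgroup admits a nontrivial \emph{visual} splitting $W_\Gamma = W_{V_1}\ast_{W_S}W_{V_2}$ with $S=V_1\cap V_2$ and $W_S$ finite --- is precisely the main theorem of the very paper being cited. So your argument is legitimate as a derivation of the characterization from the visual decomposition theorem together with Stallings' ends theorem, not as an independent proof. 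The direction (ii)$\Rightarrow$(i) is fine: Dirac's theorem gives a complete minimal separator, the visual amalgam over the finite parabolic $W_{\langle S\rangle}$ follows from the presentation, and induction applies; the closure of finitely generated virtually free groups under amalgamation over finite subgroups is standard (it follows, e.g., from the tree criterion of \cite[IV 1.9]{Dicks}), though your one-line ``grafting'' sketch suppresses the equivariance bookkeeping and would be better replaced by a citation.

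Two points in (i)$\Rightarrow$(ii) should be tightened. First, you lean on the assertion that a two-ended Coxeter group has non-complete defining graph; this is true but is itself a classification result ($W$ two-ended iff $W\cong D_\infty\times W_{\mathrm{fin}}$). You can avoid it altogether: any group with at least two ends (two or infinitely many) splits over a finite subgroup by Stallings, so for an infinite virtually free parabolic $W_{\Gamma'}$ the visual decomposition theorem applies uniformly and produces a finite parabolic $W_S$ with $S$ a separator of $\Gamma'$; a complete graph has no separator, and any separator of an induced cycle of length $\geq 4$ contains two non-adjacent vertices, which generate $D_\infty\subseteq W_S$, a contradiction in both cases. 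Second, you should justify that the visual splitting really is an amalgam along a separating set rather than an HNN extension: since the abelianization of a Coxeter group is an elementary abelian $2$-group, $W_\Gamma$ has no quotient isomorphic to $\mathbb{Z}$, so the underlying graph of any visual graph-of-groups decomposition is a tree, and nontriviality forces both $V_1\setminus S$ and $V_2\setminus S$ to be nonempty, i.e.\ $S$ separates. With these repairs your argument is correct, granted the cited machinery.
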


\section{Proof of Theorem D}
We almost proved the results of Theorem D. Let us recall the arguments.
\begin{proof}
The first statement of Theorem B was proven in Proposition \ref{hyp}. 

The second statement of Theorem B follows from Proposition \ref{freeProducts}(i). More precisely, let $\Gamma_1, \ldots, \Gamma_n$ be the connected components of $\Gamma$. Then $W_\Gamma$ is a free product of the parabolic subgroups $W_{\Gamma_i}$ and we consider $W_\Gamma$ as a graph product where the graph has $n$ vertices, no edges  and the vertex groups are $W_{\Gamma_i}$. If $n\geq 3$, then ${\rm Aut}(W_\Gamma)$ is not word hyperbolic by Proposition \ref{hyp}(i). 

The third statement of Theorem B follows from Proposition \ref{freeProducts}(ii). The graph $\Gamma$ has by assumption two connected components $\Gamma_1$ and $\Gamma_2$. We have $W_\Gamma=W_{\Gamma_1}*W_{\Gamma_2}$. The parabolic subgroups $W_{\Gamma_1}, W_{\Gamma_2}$ have by Lemma \ref{center} finite center. It follows from Proposition \ref{hyp}(ii) that ${\rm Aut}(W_\Gamma)$ is word hyperbolic iff  the parabolic subgroups $W_{\Gamma_1}$ and $W_{\Gamma_2}$ are finite. 

Using the fact that Coxeter groups always have  finite center and Corollary \ref{finiteOut} we obtain the result of Theorem B(iv). 
\end{proof}

\section{Hyperbolic groups with fixed point property ${\rm F}\mathcal{R}$}

Let us start with the following definition.
\begin{definition}
Let $\mathcal{X}$ be a class of metric spaces. A group $G$ is said to have property ${\rm F}\mathcal{X}$ if any action of $G$ by isometries on any member of $\mathcal{X}$ has a fixed point.
\end{definition}
In this section we concentrate on actions on metric trees.
We denote by $\mathcal{R}$ the class of metric trees and by $\mathcal{A}$ the class of simplicial
trees. The study of groups acting on simplicial trees was initiated by Serre \cite{Serre}. A slight generalization of Serre's fixed point property ${\rm F}\mathcal{A}$ is the fixed point property ${\rm F}\mathcal{R}$. 

One main technique to prove property ${\rm F}\mathcal{R}$ for groups is the following result.
\begin{proposition}
\label{finite}
Let $G$ be a group and $S$ be a finite generating set of $G$. If each $2$-element subset of $S$ generates a finite subgroup, then $G$ has property ${\rm F}\mathcal{R}$.
\end{proposition}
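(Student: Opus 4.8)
The plan is to exhibit an explicit action-independent argument: given any isometric action of $G$ on a metric tree $T$, I will produce a point fixed by all of $S$, which then fixes all of $G$. First I would recall the basic structure theory of isometries of $\mathbb{R}$-trees: every isometry $g$ of a metric tree is either \emph{elliptic} (it has a fixed point, and its fixed-point set $\mathrm{Fix}(g)$ is a nonempty closed subtree) or \emph{hyperbolic} (it has no fixed point, translates along a unique axis by a positive translation length $\ell(g)>0$). The key elementary fact I would invoke is the \emph{Helly property} for subtrees of a tree: if $T_1,\dots,T_n$ are closed subtrees that pairwise intersect, then $\bigcap_{i=1}^n T_i\neq\emptyset$. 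Another standard fact I need is: if $g,h$ are two isometries of a tree each having a fixed point, then $gh$ has a fixed point if and only if $\mathrm{Fix}(g)\cap\mathrm{Fix}(h)\neq\emptyset$; equivalently, if $\mathrm{Fix}(g)\cap\mathrm{Fix}(h)=\emptyset$ then $gh$ is hyperbolic with translation length $2\,d\big(\mathrm{Fix}(g),\mathrm{Fix}(h)\big)$.

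The main steps, in order, are as follows. Step 1: Show every $s\in S$ acts elliptically. If some $s$ were hyperbolic, then $\langle s\rangle\cong\mathbb{Z}$ is infinite, but $\langle s\rangle=\langle s,s\rangle$ is a $2$-element-generated subgroup, contradicting the hypothesis that every such subgroup is finite (a finitely generated torsion-free group acting with a hyperbolic element cannot be finite). So for each $s\in S$ the set $\mathrm{Fix}(s)$ is a nonempty closed subtree. Step 2: Show the fixed-point subtrees pairwise intersect. For $s,t\in S$, the subgroup $\langle s,t\rangle$ is finite by hypothesis; a finite group acting by isometries on a metric tree has a global fixed point (the standard argument: take the circumcenter of an orbit, or note a finite group cannot contain a hyperbolic element since $\ell(g^k)=k\,\ell(g)$ forces infinite order). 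Hence there is a point fixed by both $s$ and $t$, i.e.\ $\mathrm{Fix}(s)\cap\mathrm{Fix}(t)\neq\emptyset$. Step 3: Apply the Helly property to the finite family $\{\mathrm{Fix}(s):s\in S\}$ of pairwise-intersecting closed subtrees to conclude $\bigcap_{s\in S}\mathrm{Fix}(s)\neq\emptyset$. Any point $p$ in this intersection is fixed by every generator, hence by every element of $G$, so $\mathrm{Fix}(G)\neq\emptyset$ and $G$ has property $\mathrm{F}\mathcal{R}$.

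The step I expect to be the main obstacle, or at least the one requiring the most care, is Step 2 combined with the Helly argument in Step 3: one must be careful that the relevant results about fixed points and the Helly property hold for \emph{metric} ($\mathbb{R}$-)trees and not merely for simplicial trees, since property $\mathrm{F}\mathcal{R}$ concerns the larger class $\mathcal{R}$. Concretely, I would cite the fact that a finitely generated group acting on an $\mathbb{R}$-tree with all generators elliptic and all pairwise products elliptic has a global fixed point — this is a classical result (often attributed to Serre in the simplicial case, and to Tits / Culler--Morgan in the $\mathbb{R}$-tree case), and is exactly the content of combining the dichotomy, the product criterion, and Helly's theorem. Everything else is routine: the reduction in Step 1 from "hyperbolic element" to "infinite $2$-generated subgroup" is immediate, and the passage from "$S$ fixes $p$" to "$G$ fixes $p$" is trivial since $S$ generates $G$.
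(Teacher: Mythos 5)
Your argument is correct and is essentially the paper's own proof with the citations unpacked: the paper simply invokes Bridson's lemma (a finitely generated group whose generators and pairwise products are elliptic fixes a point) together with Struyve's theorem that finite groups have property ${\rm F}\mathcal{R}$, which is exactly your combination of ellipticity of generators, pairwise common fixed points for the finite subgroups $\langle s,t\rangle$, and the Helly property for closed subtrees. The only small points of care, which you essentially flag yourself, are that a single generator $s$ should be handled via $\langle s\rangle\subseteq\langle s,t\rangle$ (or by reading singletons as degenerate $2$-element subsets), and that the fixed point for a finite group on a possibly incomplete metric tree exists because the circumcenter of a finite orbit lies in its convex hull, which is precisely what the Struyve reference guarantees.
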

\begin{proof}
This follows immediately from \cite[Lemma 1]{BridsonFR} or \cite[2.2.2]{VarghesePhD} and the fact that finite groups always have property ${\rm F}\mathcal{R}$, see  \cite[Main Result 1]{Struyve} or \cite[2.2.3]{VarghesePhD}.
\end{proof}

The next proposition characterized graph products of finite groups and Coxeter groups with property ${\rm F}\mathcal{R}$.
\begin{proposition}
\label{CoxeterFR}
$ $
\begin{enumerate}
\item[(i)] Let $G_\Gamma$ be a graph product of finite groups. Then $G_\Gamma$ has property ${\rm F}\mathcal{R}$ iff $\Gamma$ is a complete graph.
\item[(ii)] Let $W_\Gamma$ be a Coxeter group. Then $W_\Gamma$ has property ${\rm F}\mathcal{R}$ iff $\Gamma$ is a complete graph.
\end{enumerate}
\end{proposition}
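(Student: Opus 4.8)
The plan is to prove both statements simultaneously using Proposition~\ref{finite} for one direction and an explicit free abelian (or just infinite-order-in-a-non-tree-fixing) witness for the other. Since the structural story for graph products of finite groups and for Coxeter groups is the same — in both cases $\Gamma$ complete means the group is a (finite) direct product of the vertex groups, and $\Gamma$ not complete means the group splits as a nontrivial amalgam or free product over a proper parabolic — I would treat (i) and (ii) in parallel, pointing out where the Coxeter case needs Lemma~\ref{center} in place of finiteness of vertex groups.

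First I would do the easy implication: suppose $\Gamma$ is a complete graph. For a graph product of finite groups, $G_\Gamma = \prod_{v\in V} G_v$ is then itself finite, and finite groups have property ${\rm F}\mathcal{R}$ (as noted in the proof of Proposition~\ref{finite}); so (i) is immediate in this direction. For a Coxeter group with $\Gamma$ complete, $W_\Gamma = \langle v_1,\dots,v_n\rangle$ is generated by the $S=V$, and any $2$-element subset $\{v,w\}\subseteq S$ generates the finite dihedral group of order $2\varphi(\{v,w\})$ (an edge is present since $\Gamma$ is complete). By Proposition~\ref{finite}, $W_\Gamma$ has ${\rm F}\mathcal{R}$.

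Next, the contrapositive: assume $\Gamma$ is not complete, so there are vertices $v,w\in V$ with $\{v,w\}\notin E$, i.e.\ $d(v,w)\geq 2$. Then the standard parabolic $G_{\langle\{v,w\}\rangle}$ (resp.\ $W_{\langle\{v,w\}\rangle}$) is a subgroup that is a \emph{free product} $G_v * G_w$ (resp.\ an infinite dihedral-type free product $W_v * W_w \cong \mathbb{Z}_2 * \mathbb{Z}_2 \cong D_\infty$, or more generally a nontrivial free product of finite groups). This free product acts on its Bass--Serre tree without a global fixed point — indeed any nontrivial free product $A*B$ with $A,B\neq 1$ contains an element of infinite order, equivalently acts on a simplicial (hence metric) tree with no fixed point. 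A subgroup acting on a tree without fixed point obstructs ${\rm F}\mathcal{A}$ for the whole group? That is the delicate point and is the main obstacle, so let me address it carefully in the next paragraph.

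The subtlety is that a subgroup without ${\rm F}\mathcal{R}$ does not immediately give the ambient group an action without fixed point — one needs an action of $G_\Gamma$ itself (resp.\ $W_\Gamma$) on a tree with no global fixed point. The clean way around this is to use the graph-product / Coxeter splitting directly: since $\Gamma$ is not complete, write $V = A \sqcup B$ (possibly with $B$ consisting of a single non-adjacent vertex together with everything not in a chosen star) so that $G_\Gamma$ visibly decomposes as an amalgamated free product $G_\Gamma = G_{\langle A\rangle} *_{G_{\langle A\cap B\rangle}} G_{\langle B\rangle}$ with proper factors; the associated Bass--Serre tree then gives an action of the full group $G_\Gamma$ without a global fixed point (the factors are proper, so neither fixes the relevant vertex). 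The same splitting works verbatim for $W_\Gamma$ by the parabolic-subgroup structure (Lemma \cite[4.1.6]{Davis}). Hence $\Gamma$ not complete $\Rightarrow$ $G_\Gamma$ (resp.\ $W_\Gamma$) fails ${\rm F}\mathcal{A}$, and since ${\rm F}\mathcal{R}$ implies ${\rm F}\mathcal{A}$, it fails ${\rm F}\mathcal{R}$ as well. Combining the two directions finishes both (i) and (ii). The one genuine obstacle — getting an action of the \emph{whole} group, not just a subgroup — is resolved precisely by exhibiting an explicit nontrivial splitting over a proper parabolic, which is exactly what non-completeness of $\Gamma$ provides.
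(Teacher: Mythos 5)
Your proposal is correct and follows essentially the same route as the paper: finiteness (for graph products) respectively Proposition~\ref{finite} with dihedral two-generator subgroups (for Coxeter groups) when $\Gamma$ is complete, and, when $\Gamma$ is not complete, the visual amalgam over the link of a vertex $v$ with a non-adjacent $w$, whose Bass--Serre tree gives an action of the whole group without a fixed point, exactly as in the paper's splitting $G_\Gamma=G_{\langle V-\{v\}\rangle}*_{G_{\langle{\rm lk}(v)\rangle}}G_{\langle{\rm st}(v)\rangle}$ (and likewise for $W_\Gamma$). Only fix the notation: your $V=A\sqcup B$ cannot be a disjoint union since you amalgamate over $G_{\langle A\cap B\rangle}$; take $A=V-\{v\}$ and $B={\rm st}(v)$, so $A\cap B={\rm lk}(v)$ and both factors are proper because $v\in B\setminus{\rm lk}(v)$ and $w\in A\setminus{\rm lk}(v)$.
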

\begin{proof}
Let $G_\Gamma$ be a graph product of finite groups. If $\Gamma$ is a complete graph, then $G_\Gamma$ is a direct product of finite groups and thus finite. By \cite[2.2.3]{VarghesePhD} finite groups always have property ${\rm F}\mathcal{R}$. If $\Gamma$ is not complete, then there exists $v, w\in V$ such that $\left\{v, w\right\}\notin E$. 
Thus $G_\Gamma=G_{\langle V-\left\{v\right\}\rangle}*_{G_{\langle{\rm lk}(v)\rangle}}G_{\langle{\rm st}(v)\rangle}$ and by \cite[6.1 Thm. 15]{Serre} follows that $G_\Gamma$ does not have property ${\rm F}\mathcal{R}$.

Let $W_\Gamma$ be a Coxeter group. If $\Gamma$ is a complete graph, then by Proposition \ref{finite} follows that $W_\Gamma$ has property ${\rm F}\mathcal{R}$. If $\Gamma$ is not complete, then there exists $v, w\in V$ such that $\left\{v, w\right\}\notin E$. 
Thus $W_\Gamma=W_{\langle V-\left\{v\right\}\rangle}*_{W_{\langle{\rm lk}(v)\rangle}}W_{\langle{\rm st}(v)\rangle}$ (see \cite[Lemma 25]{Mihalik}) and by \cite[6.1 Thm. 15]{Serre} follows that $W_\Gamma$ does not have property ${\rm F}\mathcal{R}$.
\end{proof}

The following connection between the finiteness of the outer automorphism group of a hyperbolic group and property ${\rm F}\mathcal{R}$ was proven in \cite{Paulin}.
\begin{theorem}
\label{Out}
Let $G$ be a word hyperbolic group. If $G$ has property ${\rm F}\mathcal{R}$, then ${\rm Out}(G)$ is finite.
\end{theorem}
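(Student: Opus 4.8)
The plan is to prove the contrapositive: if ${\rm Out}(G)$ is infinite, I will construct an action of $G$ by isometries on a metric tree with no global fixed point, which by the definition of property ${\rm F}\mathcal{R}$ is exactly what is needed. This is Paulin's theorem on small actions of hyperbolic groups on $\mathbb{R}$-trees \cite{Paulin}, and the argument is a rescaled Gromov--Hausdorff limit.

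First I would fix a finite generating set $S$ of $G$ and realise $G$ as acting properly, cocompactly and by isometries on a $\delta$-hyperbolic geodesic space $X$ (for instance a Rips complex of a Cayley graph). For $\alpha\in{\rm Aut}(G)$ let $\rho_\alpha$ denote the twisted action $g\cdot x=\alpha(g)x$ and put
\[
\ell(\alpha)=\inf_{x\in X}\ \max_{s\in S}\ d\bigl(x,\rho_\alpha(s)x\bigr).
\]
A short computation shows that $\ell(\alpha)$ depends only on the class of $\alpha$ in ${\rm Out}(G)$. The elementary key point is that for every constant $C$ the set $\{[\alpha]\in{\rm Out}(G)\mid \ell(\alpha)\le C\}$ is finite: correcting $\alpha$ by an inner automorphism one may assume a near-minimiser of the displacement lies in a fixed compact fundamental domain, and then properness of the $G$-action forces the images $\alpha(s)$, $s\in S$, to lie in a fixed finite subset of $G$; since an automorphism is determined by the images of $S$, only finitely many classes occur. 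Choosing $\alpha_n\in{\rm Aut}(G)$ representing pairwise distinct classes in ${\rm Out}(G)$, we conclude that $\ell_n:=\ell(\alpha_n)\to\infty$.

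Now I would rescale and pass to a limit. Pick $x_n\in X$ with $\lambda_n:=\max_{s\in S}d(x_n,\rho_{\alpha_n}(s)x_n)\le 2\ell_n$ and consider the pointed spaces $X_n=(X,\tfrac1{\lambda_n}d,x_n)$ equipped with the action $\rho_{\alpha_n}$. These are $(\delta/\lambda_n)$-hyperbolic with $\delta/\lambda_n\to 0$, and by the equivariant Gromov--Hausdorff compactness argument of \cite{Paulin} a subsequence converges to a pointed proper geodesic space $(X_\infty,x_\infty)$ carrying an isometric $G$-action; since the hyperbolicity constants tend to $0$, the space $X_\infty$ is $0$-hyperbolic, i.e. a metric tree. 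The normalisation gives $\max_{s\in S}d(x_\infty,s\cdot x_\infty)=1$, so $x_\infty$ is not fixed by $G$; and if some $y\in X_\infty$ were fixed by all of $G$, then approximating $y$ by points $y_n\in X_n$ would give $\max_{s\in S}d(y_n,\rho_{\alpha_n}(s)y_n)=o(\lambda_n)=o(\ell_n)$, contradicting that $\ell_n$ is the infimum of the displacement. Hence $G$ acts on the metric tree $X_\infty$ without a global fixed point, so $G$ does not have property ${\rm F}\mathcal{R}$.

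The step I expect to be the main obstacle is the limiting argument: arranging the rescaled spaces so that the equivariant Gromov--Hausdorff limit is genuinely a proper geodesic $0$-hyperbolic space with a non-trivial isometric $G$-action, including the verification that no global fixed point survives the passage to the limit. Since this is precisely the content of \cite{Paulin}, in the final write-up I would either reproduce that construction or simply invoke it for the existence of a non-trivial $G$-action on an $\mathbb{R}$-tree and then note that every $\mathbb{R}$-tree is a member of $\mathcal{R}$.
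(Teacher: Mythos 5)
Your argument is correct and is exactly the route the paper takes: the paper offers no independent proof but simply quotes Paulin's theorem from \cite{Paulin}, whose proof is the rescaling/equivariant limit construction you sketch (the only quibble being that the limit $\mathbb{R}$-tree need not be proper in the equivariant Gromov topology, which is irrelevant for the conclusion). So your final option of just invoking \cite{Paulin} coincides with what the paper does.
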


Regarding word hyperbolicity of automorphism groups of Coxeter groups we obtain the following result.
\begin{corollary}
Let $W_\Gamma$ be a word hyperbolic Coxeter group. If $\Gamma$ is a complete graph, then ${\rm Aut}(W_\Gamma)$ is word hyperbolic. 
\end{corollary}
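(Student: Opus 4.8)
\textit{Proof sketch.} The plan is to assemble this from three facts already established above, with no new work required. Since $\Gamma$ is complete by hypothesis, Proposition \ref{CoxeterFR}(ii) applies directly and tells us that $W_\Gamma$ has property ${\rm F}\mathcal{R}$. This is the only place the completeness assumption enters.

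Next I would invoke Theorem \ref{Out}: $W_\Gamma$ is word hyperbolic and has property ${\rm F}\mathcal{R}$, so its outer automorphism group ${\rm Out}(W_\Gamma)$ is finite. Separately, Lemma \ref{center} guarantees that the center $Z(W_\Gamma)$ is finite (this holds for every Coxeter group, so it is available unconditionally).

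Finally I would feed these into Corollary \ref{finiteOut}: $W_\Gamma$ is word hyperbolic with both $Z(W_\Gamma)$ and ${\rm Out}(W_\Gamma)$ finite, hence ${\rm Aut}(W_\Gamma)$ is word hyperbolic. That closes the argument.

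There is essentially no obstacle here; the statement is a corollary in the literal sense. The one point worth double-checking while writing is the compatibility of the fixed-point notions: Theorem \ref{Out} is phrased for property ${\rm F}\mathcal{R}$ and Proposition \ref{CoxeterFR}(ii) produces exactly ${\rm F}\mathcal{R}$, so the two results chain together without needing to pass between ${\rm F}\mathcal{R}$ and ${\rm F}\mathcal{A}$. If one instead only had ${\rm F}\mathcal{A}$ available one would need the (true, but not invoked here) implication that ${\rm F}\mathcal{R}$ is the weaker of the two in the relevant direction; since we already have ${\rm F}\mathcal{R}$ this is moot.
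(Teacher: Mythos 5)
Your proposal is correct and is essentially the paper's own proof: completeness of $\Gamma$ gives property ${\rm F}\mathcal{R}$ via Proposition \ref{CoxeterFR}(ii), and Theorem \ref{Out} then yields finiteness of ${\rm Out}(W_\Gamma)$. The only cosmetic difference is that the paper finishes by citing Theorem D(iv), which you have simply unfolded into its two ingredients, Lemma \ref{center} (finite center) and Corollary \ref{finiteOut}.
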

\begin{proof}
Let $\Gamma$ be a complete graph and $W_\Gamma$ be a word hyperbolic Coxeter group. By Proposition \ref{CoxeterFR}(ii) the group $W_\Gamma$ has property ${\rm F}\mathcal{R}$. Using Theorem \ref{Out}  we obtain the finiteness of ${\rm Out}(W_\Gamma)$. The word hyperbolicity of ${\rm Aut}(W_\Gamma)$ follows from Theorem B(iv).
\end{proof}

We want to remark that it was proven in \cite[1.1]{Howlett} that the outer automorphism group of an arbitrary Coxeter group $W_\Gamma$ where $\Gamma$ is a complete graph is always finite.

\section{Proof of Proposition B}
\begin{proof}
We suppose first that ${\rm Aut}(G_\Gamma)$ is virtually free. The group ${\rm Inn}(G_\Gamma)\cong G_\Gamma/Z(G_\Gamma)$ is finitely generated and is by Lemma \ref{Dicks} virtually free. Using the finiteness of $Z(G_\Gamma)$ and Lemma \ref{quotientVF} we obtain virtual freeness of $G_\Gamma$. It follows from Theorem \ref{virtuallyfreeG} that $\Gamma$ is chordal. Assume now that $\Gamma$ contains a SIL. Then by Lemma \ref{subgroupZZ} the group ${\rm Aut}(G_\Gamma)$ has a subgroup isomorphic to $\mathbb{Z}\times\mathbb{Z}$. Thus ${\rm Aut}(G_\Gamma)$ is not word hyperbolic and by Corollary \ref{virhyp} the group ${\rm Aut}(G_\Gamma)$ is not virtually free. This completes one direction of the proof.

We suppose now that $\Gamma$ is chordal and does not contain a SIL. By Theorem \ref{virtuallyfreeG} the group $G_\Gamma$ is virtually free and by Proposition \ref{finOut} the group ${\rm Out}(G_\Gamma)$ is finite. Now we apply the result of Proposition \ref{Pettet} and this shows that ${\rm Aut}(G_\Gamma)$ is virtually free.
\end{proof}

The structure of the proof of Proposition E is the same as above.

\end{document}